\ifpdf\usepackage{hyperref}\else\usepackage[hypertex]{hyperref}\fi
\newcommand{\setC}{\mathbb{C}}
\newcommand{\setN}{\mathbb{N}}
\newcommand{\setR}{\mathbb{R}}
\newcommand{\bv}{\mathbf{v}}
\newcommand{\cC}{\mathcal{C}}
\newcommand{\cD}{\mathcal{D}}
\newcommand{\cH}{\mathcal{H}}
\newcommand{\cL}{\mathcal{L}}
\newcommand{\cM}{\mathcal{M}}
\newcommand{\cN}{\mathcal{N}}
\newcommand{\cP}{\mathcal{P}}
\newcommand{\cQ}{\mathcal{Q}}
\newcommand{\cT}{\mathcal{T}}
\renewcommand{\leq}{\leqslant}
\renewcommand{\geq}{\geqslant}
\newcommand{\db}{\bar \partial}
\DeclareMathOperator{\vol}{vol}
\DeclareMathOperator{\FS}{FS}
\DeclareMathOperator{\im}{Im}
\DeclareMathOperator{\re}{Re}
\DeclareMathOperator{\Sym}{Sym}
\newtheorem{theo}{Th\'eor\`eme}
\newtheorem{prop}[theo]{Proposition}
\newtheorem{lemm}[theo]{Lemme}
\theoremstyle{definition}
\newtheorem{defi}[theo]{D\'efinition}
\theoremstyle{remark}
\newtheorem{rema}[theo]{Remarque}
\newtheorem*{rema*}{Remarque}
\newtheorem*{ques}{Question}
\begin{document}

\author{Olivier Biquard}
\title{M\'etriques hyperk\"ahl\'eriennes pli\'ees}
\address{UPMC Universit\'e Paris 6 et \'Ecole Normale Sup\'erieure, UMR 8553 du CNRS}
%\date{\today}

\selectlanguage{english}
\begin{abstract}
  N. Hitchin recently introduced the notion of folded hyperK\"ahler
  metrics, in relation with $SL(\infty,\setR)$ Higgs bundles.

  We provide a construction of such metrics, and prove the
  local existence of the Hitchin component for $SL(\infty,\setR)$.
\end{abstract}

\maketitle

\selectlanguage{francais}

\section*{Introduction}
Soit $M^4$ une vari\'et\'e orient\'ee de dimension 4. Une m\'etrique
hyperk\"ahl\'erienne sur $M$ peut \^etre vue comme la donn\'ee de trois formes
symplectiques, $\omega_a$, telles que
\begin{equation}
 \omega_a \land \omega_b = \delta_{ab} v,\label{eq:1}
\end{equation}
o\`u $v$ est une forme volume sur $M$. Il existe alors une m\'etrique $g$
et trois structures complexes $J_a$ sur $M$ par rapport
auxquelles $g$ est k\"ahl\'erienne, avec formes de K\"ahler $\omega_a$.

N. Hitchin \cite{Hit14} a introduit la notion de m\'etrique hyperk\"ahl\'erienne
\emph{pli\'ee} (folded) : la 4-forme $v$ n'est plus une forme volume,
mais peut s'annuler transversalement sur une sous-vari\'et\'e $X^3\subset M^4$ ;
sur $M\setminus X$ on obtient alors une m\'etrique hyperk\"ahl\'erienne, positive ou
n\'egative suivant les composantes connexes. L'exemple standard est
le fibr\'e en 2-sph\`eres d'une surface hyperbolique $\Sigma$,
$$ M = T^*\Sigma \cup \Sigma, $$
avec $X$ le fibr\'e unitaire en cercles de $T^*\Sigma$ ; la m\'etrique est
l'analogue non compact de la m\'etrique de Eguchi-Hanson sur
$T^*P^1$. Dans ce cas, les formes $\omega_2$ et $\omega_3$ se restreignent en
un couple g\'en\'erique de 2-formes ferm\'ees sur $X$, alors que $\omega_1$
s'annule. Il y a une involution $\iota$ qui \'echange les deux c\^ot\'es en
fixant $X$, et
\begin{equation}
 \iota^*g = -g, \quad \iota^*\omega_1 = -\omega_1, \quad \iota^*\omega_2=\omega_2, \quad
\iota^*\omega_3=\omega_3.\label{eq:2}
\end{equation}

Il y a deux constructions de m\'etriques hyperk\"ahl\'eriennes pli\'ees \cite{Hit14} :
\begin{itemize}
\item une construction locale, qui \`a partir d'un couple g\'en\'erique
  $(\omega_2,\omega_3)$ de 2-formes ferm\'ees analytiques r\'eelles sur $X$, produit
  une m\'etrique hyperk\"ahl\'erienne pli\'ee dans un voisinage ; cette
  m\'etrique poss\`ede une involution $\iota$ comme ci-dessus (une autre
  approche pour ce r\'esultat est propos\'ee dans la section
  \ref{sec:la-geometrie-au}, voir th\'eor\`eme \ref{theo:local} ; cette
  approche aboutit aussi \`a un \'enonc\'e d'unicit\'e qui implique
  l'existence locale de l'involution $\iota$) ;
\item une construction globale \`a partir de solutions des \'equations
  d'auto-dualit\'e de Hitchin \cite{Hit87} pour des $SL(\infty,\setR)$-fibr\'es de
  Higgs sur $\Sigma$ ; si on ne sait pas produire en g\'en\'eral de telle
  solution, une famille de dimension finie vient du plongement
  $SL(2,\setR)\subset SL(\infty,\setR)$ ; cette famille contient le mod\`ele standard,
  induit par le fibr\'e de Higgs correspondant \`a la repr\'esentation
  fuchsienne de $\pi_1(\Sigma)$ dans $SL(2,\setR)$.
\end{itemize}

La construction \`a partir de $SL(\infty,\setR)$-fibr\'es de Higgs sugg\`ere que les
m\'etriques hyperk\"ahl\'eriennes pli\'ees doivent venir dans des familles de
dimension infinie. Le but de cet article est de confirmer cette
intuition et de d\'ecrire l'espace des d\'eformations. Il est aussi de
montrer l'existence de la composante de Hitchin pour $SL(\infty,\setR)$, qui
correspond aux m\'etriques hyperk\"ahl\'eriennes pli\'ees munies d'une
projection holomorphe sur la surface $\Sigma$.

Le premier r\'esultat de cet article est formul\'e dans le cadre o\`u $M$ est r\'eunion de deux domaines ferm\'es, d\'elimit\'es par $X$ :
$$ M=M_0 \cup M_1, \quad M_0 \cap M_1=X, $$
\'echang\'es par l'involution $\iota$. Si la forme symplectique holomorphe
$\omega^c=\omega_2+i\omega_3$ d'une m\'etrique hyperk\"ahl\'erienne pli\'ee le long de $X$
n'est plus symplectique le long de $X$, en revanche, sur le quotient
par l'involution $\iota$,
$$ M_s := M/\iota , $$
elle d\'efinit une forme symplectique holomorphe jusqu'au bord. La structure diff\'erentielle de $M_s$ au bord $X$ diff\`ere de celle de $M_0$ : si $x$ est une \'equation lisse de $X$ dans $M_0$, alors $x^2$ est une \'equation lisse du bord dans $M_s$ (d'o\`u le \emph{pli}).

\begin{theo}\label{theo:principal}
  Soit une m\'etrique hyperk\"ahl\'erienne $g$ pli\'ee sur $M$. Alors :
  \begin{enumerate}
  \item toutes les d\'eformations infinit\'esimales de m\'etriques
    hyperk\"ahl\'eriennes pli\'ees s'int\`egrent en des m\'etriques
    hyperk\"ahl\'eriennes pli\'ees ;
  \item toute d\'eformation infinit\'esimale de la vari\'et\'e
    holomorphe symplectique \`a bord $M_s$ donne lieu \`a une d\'eformation
    infinit\'esimale de m\'etrique hyperk\"ahl\'erienne pli\'ee, quitte \`a
    modifier $M_s$ par un diff\'eomorphisme infinit\'esimal ne pr\'eservant
    pas n\'ecessairement le bord $X$.
  \end{enumerate}
\end{theo}
Comme il y a beaucoup de d\'eformations infinit\'esimales holomorphes symplectiques, le th\'eor\`eme fournit bien la construction de m\'etriques hyperk\"ahl\'eriennes pli\'ees.

Il peut sembler curieux de faire agir les diff\'eomorphismes ne
pr\'eservant pas le bord, mais cela a un sens pour les diff\'eomorphismes
infinit\'esimaux : le champ de vecteurs n'est pas n\'ecessairement tangent
au bord. Cette description sugg\`ere que les d\'eformations de m\'etriques
hyperk\"ahl\'eriennes pli\'ees sont li\'ees aux d\'eformations holomorphes
symplectiques \`a fronti\`ere libre.

Pr\'ecisons la question sous-jacente : \'epaississons un peu $M_s$,
c'est-\`a-dire supposons que $$M_s \subset N,$$ o\`u $N$ est une vari\'et\'e
holomorphe symplectique sans bord (dans le cas mod\`ele, un voisinage
ouvert du fibr\'e en disques dans $T^*\Sigma$), et fixons $\zeta_1\in H^2(N,X)$ la
\guillemotleft~classe de K\"ahler pli\'ee~\guillemotright. Consid\'erons une d\'eformation holomorphe
symplectique $N'$ de $N$. Pour chaque d\'eformation $X'\subset N'$ du bord
$X$, appelons $D'$ le domaine de $N'$ d\'elimit\'e par $X'$, et fixons une
forme de K\"ahler $\omega_1$ sur $D'$, pli\'ee sur $X'$, et dans la classe
$\zeta_1$. Notons $\omega^c=\omega_2+i\omega_3$ la forme holomorphe symplectique de $N'$.

\begin{ques}[Probl\`eme de Monge-Amp\`ere \`a fronti\`ere libre]
  Trouver $(X',f)$ tel que $(\omega_1+i\partial\db f)^2=\frac12 \omega^c\land \overline{\omega^c}$, o\`u $f$ est une fonction sur $D'$, et $f=O(y)$ pr\`es de $X'$ (o\`u $y$ est une \'equation de $X$ dans $N'$).
\end{ques}

Une telle solution du probl\`eme de Monge-Amp\`ere donnerait une m\'etrique
hyperk\"ahl\'erienne pli\'ee sur le domaine de $N'$ d\'elimit\'e par $X'$. Le
point important ici est qu'il est n\'ecessaire de pouvoir d\'eplacer $X'$
pour r\'esoudre l'\'equation. La question pr\'esente des analogies avec les
questions de S.~Donaldson \`a fronti\`ere libre \cite{Don10}. Voir la fin
de la section \ref{sec:les-deform-infin} pour une interpr\'etation en
termes de la complexification du groupe des symplectomorphismes
induisant un contactomorphisme au bord.

La composante de Hitchin pour le groupe $SL(\infty,\setR)$ s'interpr\`ete
\cite{Hit14} comme un espace de m\'etriques hyperk\"ahl\'eriennes pli\'ees
avec projection holomorphe sur la surface $\Sigma$, c'est-\`a-dire l'ensemble
des domaines de $T^*\Sigma$ portant une m\'etrique hyperk\"ahl\'erienne
pli\'ee. Cela correspond \`a r\'esoudre la question ci-avant pour des
domaines de $T^*\Sigma$. Nous d\'emontrons l'existence locale de cette
composante :
\begin{theo}\label{th:2}
  Au voisinage de la m\'etrique hyperk\"ahl\'erienne pli\'ee standard sur le
  fibr\'e en disques de $T^*\Sigma$, la composante de Hitchin pour le groupe
  $SL(\infty,\setR)$ est une vari\'et\'e param\'etr\'ee par
  $ \oplus_{n\geq 2} H^0(\Sigma,K^n) $.
\end{theo}
Voir le th\'eor\`eme \ref{theo:comp-H} pour l'\'enonc\'e technique pr\'ecis :
l'espace de diff\'erentielles holomorphes $\oplus_{n\geq 2} H^0(\Sigma,K^n)$ est
interpr\'et\'e comme un espace de fonctions CR holomorphes sur le bord du
fibr\'e en disques, et une certaine r\'egularit\'e dans les espaces de
Folland-Stein est n\'ecessaire. Le lien entre ces fonctions et les d\'eformations du domaine correspondant \`a la composante de Hitchin est le suivant : ces d\'eformations correspondent infinit\'esimalement au d\'eplacement du bord du fibr\'e en disques par un champ de vecteurs $fw\partial_w$, o\`u $f$ est une fonction CR holomorphe sur le bord et $w\partial_w$ est le vecteur de dilatation dans $T^*\Sigma$.

Le th\'eor\`eme confirme l'intuition que la composante de Hitchin pour
$SL(\infty,\setR)$ devrait \^etre une sorte de limite des composantes de Hitchin
pour les groupes $SL(k,\setR)$, lesquelles sont param\'etr\'ees par des sommes
finies d'espaces de diff\'erentielles holomorphes. En outre, la
param\'etrisation dans le th\'eor\`eme \ref{th:2} peut \^etre choisie pour
\^etre une section d'un analogue de la fibration de Hitchin, voir
remarque \ref{rema:section-H}.

Les sections \ref{sec:la-geometrie-au} et \ref{sec:lesp-des-metr} sont
consacr\'ees \`a la description de la g\'eom\'etrie au bord et \`a la mise en
forme comme un probl\`eme non lin\'eaire sur les diff\'erentielles de trois
1-formes ; l'analyse des d\'eformations hyperk\"ahl\'eriennes de cette
mani\`ere n'est pas nouvelle, ce qui compte ici est de d\'eterminer les
conditions au bord correspondant \`a la g\'eom\'etrie (un travail en cours
de J.~Fine, J.~Lotay et M.~Singer analyse le cas d'un bord
standard). Les espaces fonctionnels ad\'equats, repris de \cite{Biq05},
sont introduits dans la section \ref{sec:espaces-fonctionnels}, la
lin\'earisation du probl\`eme est analys\'ee section
\ref{sec:constr-dun-inverse}, et section \ref{sec:les-deform-infin}
les d\'eformations infinit\'esimales sont comprises en termes de la
g\'eom\'etrie holomorphe symplectique. Dans la section
\ref{sec:lespace-des-modules-1}, on d\'etermine les d\'eformations
infinit\'esimales correspondant aux $SL(\infty,\setR)$-fibr\'es de Higgs sur la
surface de Riemann $\Sigma$ : elles sont param\'etr\'ees par les
diff\'erentielles holomorphes de tous degr\'es (au moins quadratiques). Un
probl\`eme technique se pose alors pour parvenir au th\'eor\`eme \ref{th:2}
: la param\'etrisation du d\'eplacement du bord du domaine
holomorphe symplectique par une fonction (donnant le d\'eplacement
radial) se fait a priori avec perte de d\'eriv\'ees. Cette question est
contourn\'ee par la section \ref{sec:param-des-doma} qui propose une
param\'etrisation de tous les domaines du cotangent en termes de
fibrations (non holomorphes) par des disques holomorphes, suivant des
id\'ees remontant \`a Burns, Epstein, Lempert et Bland dans les ann\'ees 90,
notre approche ici \'etant bas\'ee sur \cite{Biq02}. Cela permet de
d\'eduire le th\'eor\`eme \ref{th:2} section
\ref{sec:la-composante-de}. Finalement, l'asymptotique au bord des
m\'etriques n\'ecessite le d\'eveloppement d'une analyse, report\'ee jusqu'\`a
la section \ref{sec:analyse}.

Mes remerciements vont \`a N.~Hitchin, pour les nombreux \'echanges qui
ont permis l'existence de cet article. Je remercie aussi C.~Guillarmou
pour d'utiles discussions sur le laplacien pli\'e au d\'ebut de ce travail.

\section{La g\'eom\'etrie au bord et son mod\`ele}
\label{sec:la-geometrie-au}

On commence par pr\'eciser le comportement au bord d'une m\'etrique
hyperk\"ahl\'erienne pli\'ee \cite{Hit14}. Nous avons un triplet
$(\omega_1,\omega_2,\omega_3)$ de 2-formes sur une vari\'et\'e $M$, qui en dehors d'une
hypersurface $X$ (le \guillemotleft~pli~\guillemotright) donne une m\'etrique hyperk\"ahl\'erienne
(d\'efinie positive ou d\'efinie n\'egative). Soit $i:X\hookrightarrow M$ l'injection.
On a 
\begin{equation}
i^*\omega_1=0,\label{eq:3}
\end{equation}
alors que les formes $\omega_2$ et $\omega_3$, restreintes \`a $X$, ont chacune un
noyau de dimension 1, dont la somme est une distribution de contact:
\begin{equation}
 H = \ker i^*\omega_2 \oplus \ker i^*\omega_3.\label{eq:4}
\end{equation}
Dans cette situation, R.~Bryant \cite{Bry04} a montr\'e l'existence d'une
unique base $(\theta^1,\theta^2,\theta^3)$ de 1-formes sur $X$, telle que
\begin{gather}
    i^*\omega_2 = - \theta^1 \land \theta^3, \quad i^*\omega_3 = \theta^1 \land \theta^2, \label{eq:5} \\
    d\theta^1 = \theta^2 \land \theta^3. \label{eq:6}
\end{gather}
Bien s\^ur, la forme $\theta^1$ est une forme de contact, et les formes $\theta^2$
et $\theta^3$ sont horizontales, c'est-\`a-dire qu'elles s'annulent sur le
champ de Reeb $X_1$ (c'est donc le premier vecteur de la base duale
$(X_1,X_2,X_3)$). 

Alors il existe une \'equation $x$ de $X\subset M$, dont la diff\'erentielle le
long de $X$ est bien d\'etermin\'ee, et telle que
\begin{equation}
\begin{split}
  \omega_1|_X &= dx \land \theta^1 + x \theta^2 \land \theta^3, \\
  \omega_2|_X &= x dx \land \theta^2 - \theta^1 \land \theta^3, \\
  \omega_3|_X &= x dx \land \theta^3 + \theta^1 \land \theta^2. 
\end{split}\label{eq:7}
\end{equation}
Ce comportement sera extrait du r\'esultat suivant, qui donne
l'existence locale et l'unicit\'e de la m\'etrique hyperk\"ahl\'erienne pli\'ee
:

\begin{theo}\label{theo:local}
  \'Etant donn\'e $(X^3,\beta_2,\beta_3)$ analytique r\'eel, o\`u $\beta_2$ et $\beta_3$ sont
  des 2-formes ferm\'ees sur $X$ dont les noyaux engendrent une
  distribution de contact, il existe sur un petit voisinage $(-\epsilon,\epsilon)\times X$
  une unique m\'etrique hyperk\"ahl\'erienne pli\'ee telle que $i^*\omega_2=\beta_2$ et
  $i^*\omega_3=\beta_3$. Cette m\'etrique satisfait la parit\'e (\ref{eq:2}).
\end{theo}
L'existence est d\'emontr\'ee par une construction twistorielle
\cite[\S7]{Hit14}. La d\'emonstration que nous donnons ici simplifie
cette preuve et aboutit directement au r\'esultat d'unicit\'e.
\begin{proof}
  On utilise le formalisme d'Ashtekar \cite{AshJacSmo88} : une
  solution du syst\`eme des \'equations de Nahm pour des champs de
  vecteurs $V_1$, $V_2$, $V_3$ sur $X$, d\'ependant de $x$, et
  pr\'eservant une forme volume fixe $\upsilon$ sur $X$,
  \begin{equation}
    \label{eq:8}
    \begin{split}
      \frac{dV_1}{dx}+[V_2,V_3]&=0, \\ \frac{dV_2}{dx}+[V_3,V_1]&=0,
      \\ \frac{dV_3}{dx}+[V_1,V_2]&=0,
    \end{split}
  \end{equation}
  produit, en posant $V_0=\frac \partial{\partial x}$, une m\'etrique hyperk\"ahl\'erienne
  d\'efinie par
  \begin{equation}
    \label{eq:9}
    g(V_i,V_j) = \upsilon(V_1,V_2,V_3) \delta_{ij}.
  \end{equation}
  R\'eciproquement, si $g$ est une m\'etrique hyperk\"ahl\'erienne et $x$ une
  fonction harmonique, alors, en posant $dx\land \upsilon=|dx|_g^2 \vol^g$ et $V_a=J_a\frac
  \partial{\partial x}$ pour $a=1\dots 3$, on r\'ecup\`ere une solution du syst\`eme
  (\ref{eq:8}).

  Appliquons cela dans notre situation : partant de $(X,\beta_2,\beta_3)$, on
  prend la base de 1-formes $(\theta^1,\theta^2,\theta^3)$ satisfaisant $d\theta^1=\theta^2\land
  \theta^3$, $\beta_2=-\theta^1\land \theta^3$ et $\beta_3=\theta^1\land \theta^2$, et $(X_1,X_2,X_3)$ la base
  associ\'ee de champs de vecteurs. Alors les conditions $d\beta_2=d\beta_3=0$
  se traduisent par le fait que $X_2$ et $X_3$ pr\'eservent la forme
  volume $\upsilon=\theta^1\land \theta^2\land \theta^3$. On r\'esout alors le syst\`eme (\ref{eq:8})
  avec les conditions initiales 
  \begin{equation}
    \label{eq:10}
    V_1(0) = 0, \quad V_2(0) = X_2, \quad V_3(0) = X_3.
  \end{equation}
  Pour des donn\'ees analytiques r\'eelles, le th\'eor\`eme de
  Cauchy-Kowalevski produit une unique solution d\'efinie pour $x$
  petit.

  On observera que $(-V_1(-x),V_2(-x),V_3(-x))$ est encore solution
  avec les m\^emes conditions initiales, donc $V_1$ est paire, et $V_2$,
  $V_3$ impaires, ce qui implique l'invariance (\ref{eq:2}) sous
  l'involution $\iota(x)=-x$ pour la solution. En outre, puisque
  $X_1=-[X_2,X_3]$, on a
  \begin{equation}
    \label{eq:11}
    V_1(x) = xX_1 + O(x^3).
  \end{equation}
  On d\'eduit le comportement de la m\'etrique (impaire, positive pour
  $x>0$, n\'egative pour $x<0$) :
  \begin{multline}
    \label{eq:12}
    g = x(dx^2+(\theta^2)^2+(\theta^3)^2) + x^{-1}(\theta^1)^2 \\ + O(x^3) G(dx,x^{-1}\theta^1,\theta^2,\theta^3),
  \end{multline}
  et celui des trois formes de K\"ahler donn\'e dans (\ref{eq:7}).
  Ici $G((e^i))=\sum G_{ij}e^ie^j$ est un 2-tenseur sym\'etrique dont les coefficients $G_{ij}$ sont lisses.

  R\'eciproquement, \'etant donn\'ee une m\'etrique hyperk\"ahl\'erienne,
  analytique r\'eelle, avec le comportement (\ref{eq:12}), on calcule
  son laplacien
  \begin{equation}
    \label{eq:13}
    \Delta = -x^{-1}(\partial_x^2+x^2X_1^2+X_2^2+X_3^2)+\cdots 
  \end{equation}
  Il en r\'esulte imm\'ediatement qu'on peut r\'esoudre $\Delta y=0$ dans un
  voisinage de $X$ avec $y=x + O(x^2)$ ; cette solution, unique,
  permet de reconstruire les champs $V_a$. L'unicit\'e s'en d\'eduit.
\end{proof}

Il est int\'eressant de noter qu'existe un cas o\`u les formules
(\ref{eq:7}) sont exactes globalement et pas seulement sur $X$ : si
$X$ est le groupe de Heisenberg, muni de sa base invariante de
1-formes telle que
\begin{equation}
  \label{eq:14}
  d\theta^1 = \theta^2\land \theta^3, \quad d\theta^2=d\theta^3=0,
\end{equation}
alors $(V_1,V_2,V_3)(x)=(xX_1,X_2,X_3)$ est une solution exacte de
(\ref{eq:8}), donc les formules (\ref{eq:7}) d\'efinissent des 2-formes
ferm\'ees satisfaisant le syst\`eme (\ref{eq:1}) sur $M=\setR\times X$, et la
m\'etrique hyperk\"ahl\'erienne pli\'ee $g$ est explicit\'ee par
\begin{equation}
  \label{eq:15}
  g_0 = x \big(dx^2+(\theta^2)^2+(\theta^3)^2\big) + x^{-1} (\theta^1)^2.
\end{equation}
(On peut le voir aussi par application de l'ansatz de Gibbons-Hawking).

Le cas du groupe de Heisenberg est le mod\`ele \guillemotleft{} plat \guillemotright{} de la g\'eom\'etrie
que nous \'etudions, au sens suivant. Prenons des coordonn\'ees
$(x^1,x^2,x^3)$, de sorte que
$$ \theta^1 = dx^1 + x^2 dx^3, \quad \theta^2=dx^2, \quad \theta^3=dx^3. $$
Dans le cas d'une m\'etrique hyperk\"ahl\'erienne pli\'ee g\'en\'erale $g$, soit
un point $p\in X$, choisissons gr\^ace au lemme de Darboux des coordonn\'ees locales
$(x^i)$ sur $X$ en $p$ de sorte que
$$ \theta^1 = dx^1 + x^2 dx^3, \quad \theta^2(p) = dx^2, \quad \theta^3(p) = dx^3. $$
Consid\'erons les dilatations inhomog\`enes
$$ h_t(x,x^1,x^2,x^3) = (tx,t^2x^1,tx^2,tx^3). $$
Alors la m\'etrique mod\`ele (\ref{eq:15}) satisfait $h_t^*g_0=t^3g_0$, et
plus g\'en\'eralement, \`a partir de (\ref{eq:7}), quand $t\to0$, on voit que
les $t^{-3}h_t^*\omega_a$ convergent vers les 2-formes du mod\`ele, et en particulier
\begin{equation}
 \lim_{t\to0} t^{-3}h_t^* g = g_0.\label{eq:16}
\end{equation}

Il y a une analogie claire avec la g\'eom\'etrie hyperbolique complexe et
les m\'etriques asymptotiquement hyperboliques complexes \cite{Biq00},
mais qui n'est qu'une analogie : en effet, la m\'etrique $x^{-3}g_0$,
invariante par les dilatations $h_t$, n'est pas hyperbolique complexe. Elle est n\'eanmoins quasi-isom\'etrique \`a la m\'etrique hyperbolique complexe.

\section{L'espace des m\'etriques hyperk\"ahl\'eriennes pli\'ees}
\label{sec:lesp-des-metr}

Nous consid\'erons \`a pr\'esent les d\'eformations d'une m\'etrique
hyperk\"ahl\'erienne pli\'ee $g_0$ sur $(M,X)$. Puisque deux structures de
contact proches sont diff\'eomorphes, on peut supposer que la
distribution de contact $H$ induite sur $X$ par (\ref{eq:4}) reste
fixe. Les formes $i^*\omega_2$ et $i^*\omega_3$ sont alors n\'ecessairement des
2-formes verticales sur $X$ (et $i^*\omega_1=0$). Enfin nous consid\'ererons
les d\'eformations en fixant les classes de cohomologie des formes $\omega_a$
: notons $\zeta_1$ la classe de $\omega_1$ dans $H^2(M_0,X)$, et $\zeta_2$, $\zeta_3$
les classes de $\omega_2$ et $\omega_3$ dans $H^2(M_0)$.

Cela nous am\`ene \`a consid\'erer l'espace $\cQ$ des triplets $(\omega_a)$ de
2-formes ferm\'ees sur $M_0$, de classes de cohomologie $(\zeta_a)$ dans
$H^2(M_0,X)$ ou $H^2(M_0)$ respectivement, tels que
\begin{equation}
 i^*\omega_1=0, \quad i^*\omega_2,i^*\omega_3 \text{ verticales.}\label{eq:17}
\end{equation}

L'espace des m\'etriques hyperk\"ahl\'eriennes pli\'ees sur $M_0$ est alors
\begin{equation}
 \cM = \{ (\omega_a)\in \cQ, \omega_a\land \omega_b = \delta_{ab} \omega_1^2 \} .\label{eq:18}
\end{equation}
Par le th\'eor\`eme \ref{theo:local}, de telles m\'etriques satisfont
n\'ecessairement la parit\'e (\ref{eq:2}) pr\`es du bord $X$ (pour une
certain choix de $x$), ce qui implique qu'on peut les prolonger par
doublement en des m\'etriques hyperk\"ahl\'eriennes pli\'ees sur $M$
entier. (Le th\'eor\`eme \ref{theo:local} n'est valable que pour des
donn\'ees analytiques r\'eelles, mais si celles-ci sont seulement $C^\infty$, il
donne n\'eanmoins le m\^eme r\'esultat sur les germes en $X$, ce qui suffit
pour le prolongement par doublement).

Aussi raisonnerons-nous uniquement sur la vari\'et\'e \`a bord $M_0$.

Le but de cet article est de comprendre l'espace $\cM$. On peut ainsi d\'ecrire $\cQ$ \`a partir de
\begin{equation}
  \cT = \{ (\alpha_a)\in \Omega^1(M_0), i^*\alpha_1=0, i^*d\alpha_2, i^*d\alpha_3 \text{
    verticales} \}. \label{eq:19}
\end{equation}
Les conditions sur les 1-formes sont \'ecrites de sorte que $(\alpha_a)\in \cT$
implique $(\omega_a+d\alpha_a)\in \cQ$, et tout \'el\'ement de $\cQ$ s'\'ecrit de cette
mani\`ere. L'espace $\cM$ se d\'ecrit comme l'image par $d$ de
$P^{-1}(0)$, pour
\begin{equation}
  \label{eq:20}
  P\big((\alpha_a)\big) = \big((\omega_a+d\alpha_a) \land (\omega_b+d\alpha_b)\big)_0,
\end{equation}
o\`u l'indice $0$ d\'enote la partie sans trace ; on a donc d\'efini un op\'erateur
\begin{equation}
  \label{eq:21}
  P : \cT \longrightarrow \Sym_0^2(\setR^3)\otimes\Omega^4.
\end{equation}

Cet op\'erateur, et sa lin\'earisation, interviennent classiquement dans
les probl\`emes d'autodualit\'e, voir par exemple \cite{Biq05} dans un
contexte proche.

Observons qu'il y a une contrainte sur l'image de $P$ : en effet, les
nombres $\zeta_1\cup \zeta_2, \zeta_1\cup \zeta_3\in H^4(M_0,X)=\setR$ sont repr\'esent\'es par
\begin{equation}
  \label{eq:22}
  \zeta_1\cup \zeta_b = \int_{M_0} (\omega_1+d\alpha_1)\land (\omega_b+d\alpha_b), \quad b=2,3.
\end{equation}

L'analyse de l'op\'erateur $P$ requiert d'introduire les espaces
fonctionnels ad\'equats, ce que nous faisons maintenant.

\section{Espaces fonctionnels}
\label{sec:espaces-fonctionnels}

Les espaces fonctionnels sur le bord, adapt\'es \`a la g\'eom\'etrie de
contact, sont les espaces de Folland-Stein $\FS^k$ \cite{FolSte74} : une fonction $f$
sur $X$ est dans l'espace $\FS^k$ si elle a $k$ d\'eriv\'ees horizontales
dans $L^2$, c'est-\`a-dire $$X_2^{j_2}X_3^{j_3}f\in L^2(X) \text{ d\`es que }
j_2+j_3\leq k.$$ Une version fractionnaire est d\'efinie en consid\'erant
l'op\'erateur hypoelliptique $\square=-(X_2^2+X_3^2)$, dont la
d\'ecomposition spectrale permet de d\'efinir la norme
$\|f\|_{\FS^k} = \|(1+\square^{\frac k2})f\|_{L^2}$.

Une caract\'eristique, presque une d\'efinition des m\'etriques pli\'ees, est
que la forme volume s'annule simplement sur $X$ : pr\`es de $X$, on a
$$ \vol^{g_0} \sim x dx\land \theta^1\land \theta^2\land \theta^3. $$
Nous consid\'erons l'espace $L^2$ par rapport \`a cette forme volume, et
un espace de fonctions $L^2$ \`a poids par
$$ L^2_\delta=x^{\delta+1}L^2. $$
La d\'efinition est faite pour que $x^{\delta'}\in L^2_\delta$ d\`es que $\delta'>\delta$.

Rappelons que sur $X$ nous disposons d'un rep\`ere $(X_1,X_2,X_3)$ dual
\`a $(\theta^1,\theta^2,\theta^3)$, et nous pouvons identifier un voisinage de $X$ dans
$M_0$ \`a $[0,\epsilon)\times X$, avec premi\`ere coordonn\'ee $x$, ce qui nous permet
d'ajouter le champ de vecteurs $\partial_x$. On consid\`ere alors, pour $s\in \setN$,
l'espace de Sobolev \`a poids d\'efini par la norme
$$ \|f\|^2_{H^s_\delta} = \sum_{|j|:=j_0+\cdots+j_3\leq s}
\|\partial_x^{j_0}(xX_1)^{j_1}X_2^{j_2}X_3^{j_3}f\|^2_{L^2_{\delta-|j|}}. $$ La
norme est ind\'ependante de l'ordre dans lequel on \'ecrit les champs de
vecteurs, car
$$ [\partial_x,xX_1] = X_1 = - [X_2,X_3] . $$
Par commodit\'e d'\'ecriture, on notera $D$ toute d\'erivation parmi $\partial_x$,
$xX_1$, $X_2$ et $X_3$ et $D^j$ toute composition d'ordre $j$ de ces
d\'erivations. La norme de Sobolev pr\'ec\'edente s'\'ecrit ainsi
$$ \|f\|^2_{H^s_\delta} = \sum_{j\leq s} \|D^jf\|^2_{L^2_{\delta-j}}. $$

Une autre interpr\'etation des espaces de Sobolev \`a poids s'obtient en consid\'erant la m\'etrique $\frac{g_0}{x^3}$, quasi-isom\'etrique \`a une m\'etrique asymptotiquement hyperbolique complexe, donc on dispose d'espaces de Sobolev $H^s_{\frac{g_0}{x^3}}$ obtenus en sommant les carr\'es des normes $H^s$ ordinaires sur un recouvrement localement fini par des boules. Le lien avec les espaces de Sobolev d\'efinis pr\'ec\'edemment est
\begin{equation}
 H^s_\delta = x^{\delta-2} H^s_{\frac{g_0}{x^3}}.\label{eq:23}
\end{equation}
Cette relation permet en particulier d'\'etendre la d\'efinition de $H^s_\delta$ \`a des $s$ fractionnaires.

Enfin, on utilisera aussi une variante o\`u $\ell$ d\'eriv\'ees (o\`u $\ell\in \setN$) le long de $X$
sont mieux contr\^ol\'ees : si $s\geq \ell$, on note $H^{s,\ell}_\delta$ l'espace des fonctions
$f\in H^s_\delta$, telles que pour $|j|:=j_2+j_3\leq \ell$ on ait
$$ X_2^{j_2}X_3^{j_3}f \in H^{s-|j|}_\delta . $$
Ce type d'espace est utilis\'e dans \cite{Biq05}, d'o\`u nous extrayons le
lemme d'extension suivant :
\begin{lemm}\label{lem:extension}
  1. Soit $\delta\in (0,1)$. Si une fonction $f$ satisfait $Df\in L^2_{-1+\delta}$, alors $f$
  admet une valeur au bord $f|_X\in \FS^\delta$, et $f-(f|_X)\in  L^2_\delta$.

  R\'eciproquement, il existe un op\'erateur d'extension, $E_0$, qui \`a
  $f_0\in \FS^{\ell+\delta}(X)$ associe une extension $f=E_0(f_0)$ sur $M_0$
  telle que $Df\in H^{\infty;\ell}_{-1+\delta}$.

  2. Plus g\'en\'eralement, il existe des op\'erateurs d'extension $E_k$,
  associant \`a un d\'eveloppement $f_0+xf_1+\cdots +x^kf_k$, o\`u $f_j\in
  \FS^{k-j+\ell+\delta}(X)$, une extension $f$ sur $M_0$, telle que
  \begin{enumerate}
  \item pour $j\leq k$ on a $D^j(f-\sum_0^jx^if_i)\in L^2_{k-j+\delta}$ ;
  \item $D^{k+1}f\in H^{\infty;\ell}_{-1+\delta}$.
  \end{enumerate}
\end{lemm}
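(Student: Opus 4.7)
Mon plan est de d\'emontrer la partie 1 en deux \'etapes (trace puis extension), puis de d\'eduire la partie 2 par r\'ecurrence, l'ensemble reposant sur la r\'eduction au mod\`ele plat du groupe de Heisenberg et sur les arguments d\'evelopp\'es dans \cite{Biq05} pour une situation analogue.

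Pour la trace, l'hypoth\`ese $Df\in L^2_{-1+\delta}$ entra\^ine en particulier $\partial_xf\in L^2_{-1+\delta}$, soit $\int|\partial_xf|^2 x^{1-2\delta}\,dx\,d\vol_X<\infty$. Comme $\delta>0$, le poids $s^{2\delta-1}$ est int\'egrable pr\`es de $0$, et une in\'egalit\'e de Cauchy-Schwarz pond\'er\'ee \`a $p\in X$ fix\'e fournit l'existence presque partout de $f_0(p):=\lim_{x\to 0^+}f(x,p)$, avec le contr\^ole
$$ |f(x,p)-f_0(p)|^2 \leq Cx^{2\delta}\int_0^x|\partial_xf|^2 s^{1-2\delta}\,ds, $$
d'o\`u $f-f_0\in L^2_\delta$ par Fubini. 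La r\'egularit\'e $\FS^\delta$ de $f_0$ r\'esulte de l'information suppl\'ementaire $X_2f,X_3f\in L^2_{-1+\delta}$ : identifiant $\FS^\delta(X)$ au domaine de $\square^{\delta/2}$ avec $\square=-(X_2^2+X_3^2)$, on l'obtient par interpolation entre la r\'egularit\'e normale et la r\'egularit\'e horizontale, \`a la mani\`ere des th\'eor\`emes de trace de Sobolev.

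Pour construire l'op\'erateur $E_0$, je poserais, pour $f_0\in \FS^{\ell+\delta}(X)$,
$$ E_0(f_0)(x,\cdot) := \phi(x\sqrt\square)\,f_0, $$
o\`u $\phi\in \cC^\infty_c([0,\infty))$ est \'egal \`a $1$ pr\`es de $0$. Un calcul mode par mode via la d\'ecomposition spectrale de $\square$ (changement de variable $y=x\lambda$ avec $\square\psi_\lambda=\lambda^2\psi_\lambda$) donne par exemple
$$ \|\partial_x E_0(f_0)\|^2_{L^2_{-1+\delta}} \leq C\,\|f_0\|^2_{\FS^\delta}, $$
et des estimations analogues pour les it\'er\'es des d\'erivations. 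Sur le mod\`ele de Heisenberg, les d\'erivations horizontales $X_2,X_3$ commutent avec $\square$ et fournissent les $\ell$ d\'eriv\'ees tangentielles suppl\'ementaires contr\^olant la norme $H^{\infty;\ell}_{-1+\delta}$ ; le champ $xX_1=-x[X_2,X_3]$ se traite de fa\c{c}on similaire. Pour la partie 2, je poserais
$$ E_k\bigl(f_0+xf_1+\cdots+x^kf_k\bigr) := \sum_{j=0}^k x^j\,E_0(f_j), $$
les propri\'et\'es (i) et (ii) d\'ecoulant de l'application r\'ecursive de la partie 1 au reste $f-\sum_{i=0}^jx^if_i$ \`a chaque ordre, le facteur suppl\'ementaire $x^j$ absorbant une perte de $j$ crans en r\'egularit\'e de Folland-Stein.

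L'obstacle principal attendu est le contr\^ole simultan\'e des $\ell$ d\'eriv\'ees tangentielles suppl\'ementaires dans $H^{s;\ell}_\delta$ : au-del\`a du mod\`ele exact de Heisenberg, les commutateurs $[X_2,\phi(x\sqrt\square)]$ et $[X_3,\phi(x\sqrt\square)]$ ne s'annulent pas, mais sont d'ordre strictement inf\'erieur, ce qui permet une it\'eration. Ce calcul fin d'op\'erateurs de type Heisenberg est men\'e en d\'etail dans \cite{Biq05}, d'o\`u le lemme est extrait.
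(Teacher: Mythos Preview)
Votre esquisse est correcte et co\"incide pour l'essentiel avec ce que fait l'article : celui-ci se contente de citer les lemmes~2.5 et~2.7 de \cite{Biq05}, en remarquant que la m\'etrique $g_0/x^3$ est quasi-isom\'etrique aux m\'etriques asymptotiquement hyperboliques complexes trait\'ees l\`a-bas, et que la partie~2 s'obtient en appliquant le r\'esultat \`a chaque $f_j$ s\'epar\'ement --- exactement votre $E_k=\sum_j x^jE_0(f_j)$. Votre construction explicite $E_0(f_0)=\phi(x\sqrt{\square})f_0$ est pr\'ecis\'ement celle qui sous-tend la r\'ef\'erence, donc vous d\'eballez ce que l'article emballe dans une citation.

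Un point mineur : l'argument \og par Fubini \fg{} pour $f-f_0\in L^2_\delta$ \`a partir de l'estimation ponctuelle de Cauchy-Schwarz laisse appara\^itre un facteur $|\ln x|$ non born\'e ; il faut invoquer directement l'in\'egalit\'e de Hardy pond\'er\'ee
\[
\int_0^1 x^{-2\delta-1}|F(x)|^2\,dx \leq \frac{1}{\delta^2}\int_0^1 x^{1-2\delta}|F'(x)|^2\,dx, \qquad F(0)=0,
\]
qui donne imm\'ediatement la conclusion. C'est un d\'etail standard et n'affecte pas la validit\'e de votre plan.
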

\begin{proof}
  Ce sont les lemmes 2.5 et 2.7 dans \cite{Biq05}, qui s'appliquent car on a vu que la m\'etrique $\frac{g_0}{x^3}$ est quasi-isom\'etrique aux m\'etriques asymptotiquement hyperboliques complexes, utilis\'ees dans \cite{Biq05}. Ils n'y sont \'enonc\'es que pour une seule fonction $f_0$, mais en l'appliquant \`a chaque $f_j$ on d\'eduit l'\'enonc\'e \'ecrit ici.
\end{proof}
On peut \'etendre l\'eg\`erement le lemme \ref{lem:extension} de la mani\`ere
suivante : si on a seulement $f\in H^{\frac12}_\delta$ alors les
restrictions aux tranches $f|_{\{x\}\times X}$, bien d\'efinies dans $L^2$,
sont contr\^ol\'ees par la norme $H^{\frac12}$ de mani\`ere uniforme dans
les boules de la m\'etrique $\frac{g_0}{x^3}$, et il en r\'esulte
que $x^{-\delta}f|_{\{x\}\times X}\to0$ dans $L^2(\theta^1\theta^2\theta^3)$ quand $x$ tend vers
$0$. Donc, pour $s\geq \frac12$, l'espace 
$$\cH^s_\delta=\FS^\delta(X)\oplus H^s_\delta(M_0), $$
constitu\'e des fonctions $f$ qui se d\'ecomposent pr\`es du bord en
\begin{equation}
 f = E_0(f_0) + f_1, \quad f_0\in \FS^\delta(X), f_1\in H^s_\delta(M_0),\label{eq:24}
\end{equation}
a un sens (la projection sur $\FS^\delta(X)$ \'etant la valeur au bord).
Si $s\geq 1$, c'est exactement l'espace des fonctions $f$ sur $M_0$
telles que $Df\in H^{s-1}_{-1+\delta}(M_0)$.

De mani\`ere analogue est d\'efini, pour $s\geq \ell+\frac12$, l'espace
$$ \cH^{s,\ell}_\delta := \FS^{\ell+\delta}(X) \oplus H^{s,\ell}_\delta(M_0), $$
constitu\'e des fonctions $f$ avec une d\'ecomposition (\ref{eq:24}) avec
$f_0\in \FS^{\ell+\delta}(X)$ et $f_1\in H^{s,\ell}_\delta(M_0)$. Si $s\geq \ell+1$, c'est
l'espace des fonctions $f$ sur $M_0$ telles que $Df\in H^{s-1,\ell}_{-1+\delta}(M_0)$.

Enfin, au lieu d'un seul terme au bord, on peut d\'efinir des espaces o\`u
les fonctions disposent d'un d\'eveloppement d'ordre $k$ en $x$ et d'un
reste d'ordre $k+\delta$ : pour $s\geq \ell+\frac12$,
$$ \cH^{s,\ell}_{k,\delta} := \oplus_0^k \FS^{\ell+k-j+\delta}(X) \oplus H^{s,\ell}_{k+\delta}(M_0). $$
Si $s$ est assez grand, $\cH^{s,\ell}_{k,\delta}$ est constitu\'e des $f$ telles que $D^{k+1}f\in H^{s-k-1,\ell}_{-1+\delta}(M_0)$.

Disons tout de suite qu'on choisira dor\'enavant des valeurs 
$$ \ell\gg 0, \quad \delta=\frac12, \quad s\geq \ell+\frac12.$$
Le choix du poids $\delta=\frac12$ rend transparent le rapport aux espaces
$L^2$ ordinaires, mais nous continuerons d'utiliser $\delta$ car tous les
\'enonc\'es sont valables d\`es que $\delta\in (0,1)$, voir remarque \ref{rem:delta}.
Enfin, dans la section \ref{sec:param-des-doma} on utilisera
$s=\ell+\frac12$ donc la seule vraie libert\'e est sur $\ell$.

Finalement, ces choix permettent de plonger contin\^ument
$$\cH^{s,\ell}_{k,\delta} \subset C^k,$$ et $\cH^{s,\ell}_{k,\delta}$ est une alg\`ebre.
Dans l'int\'erieur de $M_0$, les choix permettent d'obtenir
$\cH^{s,\ell}_{k,\delta}\subset C^j_{\mathrm{loc}}$ pour tout $j$ grand pr\'ealablement fix\'e, mais en
revanche les normes \`a poids ne contr\^olent pas les d\'eriv\'ees radiales
$\partial_x^jf$ au bord.

Nous pouvons maintenant d\'efinir une version Sobolev des espaces $\cT$,
$\cQ$ et $\cM$ de la mani\`ere suivante. Nous consid\'erons la base de
1-formes $(e^i)$ d\'efinie par
$$ e^0=dx, \quad e^1=x^{-1}\theta^1, \quad e^2=\theta^2, \quad e^3=\theta^3. $$
La base $(x^{\frac12} e^i)$ est orthonormale le long de $X$. Nous
d\'efinissons alors $\cQ^{s,\ell}_\delta$ comme l'espace des triplets de 2-formes
$(\omega_a)$ :
\begin{enumerate}\item \`a coefficients $x\cH^{s,\ell}_\delta$ dans la base $(e^i\land e^j)$,
  \item satisfaisant (\ref{eq:17}) ; comme $e^2\land e^3=\theta^2\land \theta^3$, la
    verticalit\'e de $i^*\omega_2$ et $i^*\omega_3$ est impliqu\'ee par la premi\`ere
    condition, et (\ref{eq:17}) se r\'eduit donc \`a $i^*\omega_1=0$.
\end{enumerate}
La premi\`ere condition implique que les 2-formes se prolongent
contin\^ument au-dessus de $X$. Puisque
$\vol^{g_0}=xdx\land\theta^1\land\theta^2\land\theta^3=x^2e^1\land e^2\land e^3\land e^4$, un tel triplet
$(\omega_a)$ satisfait
\begin{equation}
  \label{eq:25}
  \omega_a \land \omega_b \in \cH^{s,\ell}_\delta \vol^{g_0}.
\end{equation}

On d\'efinit aussi l'espace $\cT^{s,\ell}_\delta$ des triplets de 1-formes $(\alpha_a)$ :
\begin{enumerate}\item \`a coefficients $\cH^{s,\ell}_{2,\delta}$ dans la base $(e^i)$,
\item dont la coordonn\'ee sur $e^1$ s'annule sur $X$ (donc il s'agit de
  formes s'\'etendant jusqu'au bord $X$ en des formes de classe $C^1$),
\item et tels que $i^*\alpha_1=0$ et $(d\alpha_a)\in \cQ^{s-1,\ell}_\delta$.
\end{enumerate}
Pr\'ecisons la derni\`ere condition : si les coefficients des $\alpha_a$ sont dans l'espace $\cH^{s,\ell}_{2,\delta}$, ceux des $d\alpha_a$ sont dans $\cH^{s-1,\ell}_{1,\delta}$ ; pour qu'ils soient en outre dans $x\cH^{s-1,\ell}_\delta$, il faut et il suffit que leur restriction \`a $X$ s'annule. Ces consid\'erations aboutissent \`a expliciter les condition sur $(\alpha_a)$ par :
\begin{lemm}\label{lem:condition-1-formes}
  Soit $(\alpha_a)$ un triplet de 1-formes, \`a coefficients $\cH^{s,\ell}_{2,\delta}$ dans la base $(e^i)$, dont le coefficient sur $e^1$ s'annule sur le bord $X$, et tel que $i^*\alpha_1=0$. Notons $\alpha_a=\alpha_{a,i}e^i$. Alors $(\alpha_a)\in \cT^{s,\ell}_\delta$ si les coefficients
$$ \partial_x\alpha_{a,2}-X_2\alpha_{a,0}, \quad \partial_x\alpha_{a,3}-X_3\alpha_{a,0}, \quad X_2\alpha_{a,3}-X_3\alpha_{a,2}+\partial_x\alpha_{a,1} $$
s'annulent le long de $X$.
\end{lemm}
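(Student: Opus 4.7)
\emph{Proof proposal.} The plan is to compute $d\alpha_a$ directly in the frame $(e^i\wedge e^j)$ and extract the boundary values of its coefficients. The condition $(d\alpha_a)\in\cQ^{s-1,\ell}_\delta$ requires each coefficient of $d\alpha_a$ in that frame to lie in $x\cH^{s-1,\ell}_\delta$; by the discussion just before the lemma, the coefficients of $d\alpha_a$ a priori sit in $\cH^{s-1,\ell}_{1,\delta}$, so this condition is equivalent to the vanishing of their trace on $X$. Thus the lemma reduces to reading off those traces.

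First I would compute the structure forms $de^i$. Differentiating $\theta^1=x\,e^1$ and using $d\theta^1=\theta^2\wedge\theta^3=e^2\wedge e^3$ gives
\[ de^1 = \frac{1}{x}\bigl(e^2\wedge e^3 - e^0\wedge e^1\bigr), \]
while $de^0=0$. The essential point about $de^2=d\theta^2$ and $de^3=d\theta^3$ is that the hypothesis $d\beta_2=d\beta_3=0$ (equivalently, $X_2$ and $X_3$ preserve the volume $\theta^1\wedge\theta^2\wedge\theta^3$, as used in the proof of Theorem~\ref{theo:local}) forces both $d\theta^2$ and $d\theta^3$ to contain $\theta^1$ as a factor; in the frame $(e^i\wedge e^j)$ they are therefore $O(x)$ and contribute nothing to the trace on $X$.

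Next I would expand $d\alpha_a = \sum_{j<k}(V_j\alpha_{a,k}-V_k\alpha_{a,j})\,e^j\wedge e^k + \alpha_{a,i}\,de^i$ with dual frame $V_0=\partial_x$, $V_1=xX_1$, $V_2=X_2$, $V_3=X_3$. Since $\alpha_{a,1}|_X=0$, write $\alpha_{a,1}=x\tilde\alpha_{a,1}$, so that $\tilde\alpha_{a,1}|_X=\partial_x\alpha_{a,1}|_X$; then the singular piece $\alpha_{a,1}\,de^1=\tilde\alpha_{a,1}(e^2\wedge e^3-e^0\wedge e^1)$ is in fact smooth up to $X$. All contributions involving $V_1=xX_1$ vanish at $x=0$. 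A short check then shows that the coefficients of $e^0\wedge e^1$, $e^1\wedge e^2$ and $e^1\wedge e^3$ automatically restrict to zero on $X$: on $e^0\wedge e^1$, the term $\partial_x\alpha_{a,1}|_X$ cancels the $-\tilde\alpha_{a,1}|_X$ coming from $\alpha_{a,1}\,de^1$; on $e^1\wedge e^j$ ($j=2,3$), both $V_1\alpha_{a,j}$ and $X_j\alpha_{a,1}=xX_j\tilde\alpha_{a,1}$ vanish at $x=0$.

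The only surviving boundary values are those of the coefficients of $e^0\wedge e^2$, $e^0\wedge e^3$ and $e^2\wedge e^3$, which restrict to $X$ to
\[ \partial_x\alpha_{a,2}-X_2\alpha_{a,0},\qquad \partial_x\alpha_{a,3}-X_3\alpha_{a,0},\qquad X_2\alpha_{a,3}-X_3\alpha_{a,2}+\partial_x\alpha_{a,1} \]
respectively; so the simultaneous vanishing of these three expressions on $X$ is precisely $(d\alpha_a)\in\cQ^{s-1,\ell}_\delta$. The only real subtlety—the main thing to keep straight—is the interplay between the $1/x$ factor in $de^1$ and the vanishing of $\alpha_{a,1}$ on $X$; it is this cancellation that produces the $\partial_x\alpha_{a,1}$ term in the third expression and gives the conditions their slightly asymmetric form. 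Apart from that, the proof is bookkeeping.
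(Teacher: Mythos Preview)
Your proposal is correct and follows exactly the same approach as the paper: compute $d\alpha_a$ in the frame $(e^i\wedge e^j)$, use $d\theta^1=\theta^2\wedge\theta^3$, that $d\theta^2$ and $d\theta^3$ are vertical (contain $\theta^1$ as a factor), and that $\alpha_{a,1}|_X=0$, then read off the boundary traces. The paper's proof is a one-sentence sketch of precisely this computation; you have simply written it out in full, including the cancellation on the $e^0\wedge e^1$ coefficient and the automatic vanishing on $e^1\wedge e^2$ and $e^1\wedge e^3$, which the paper leaves implicit. The only detail you omit is the paper's parenthetical remark that for $a=1$ the third condition is automatic from $i^*\alpha_1=0$ (since then $\alpha_{1,2}|_X=\alpha_{1,3}|_X=\partial_x\alpha_{1,1}|_X=0$), but this is a side observation rather than part of the argument.
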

\begin{proof}
  Il suffit de prendre la diff\'erentielle ext\'erieure de $\alpha_a=\alpha_{a,0}dx+\alpha_{a,1}x^{-1}\theta^1+\alpha_{a,2}\theta^2+\alpha_{a,3}\theta^3$, en tenant compte de $\alpha_{a,1}|_X=0$, sachant que $d\theta^1=\theta^2\land \theta^3$ et que $d\theta^2$ et $d\theta^3$ sont verticales. (La troisi\`eme annulation est automatique pour $\alpha_1$ puisque $i^*\alpha_1=0$).
\end{proof}

On consid\`ere alors $P$ comme un op\'erateur
\begin{equation}
  \label{eq:26}
  P : \cT^{s+1,\ell}_\delta \longrightarrow \cH^{s,\ell}_{\delta;\zeta}(\Sym_0^2\setR^3) \vol^{g_0},
\end{equation}
o\`u l'indice $\zeta$ signifie, conform\'ement \`a (\ref{eq:22}), que $v=(v_{ab})\in
\Sym_0^2\setR^3\otimes \vol^{g_0}$ satisfait
\begin{equation}
\int_{M_0} v_{12} = \zeta_1\cup \zeta_2, \quad \int_{M_0} v_{13} = \zeta_1 \cup \zeta_3.\label{eq:27}
\end{equation}

Les m\'etriques hyperk\"ahl\'eriennes dans $\cQ^{s,\ell}_\delta$ sont donc obtenues
comme l'espace
\begin{equation}
\cM^{s,\ell}_\delta = d(P^{-1}(0)) \subset \cQ^{s,\ell}_\delta.\label{eq:28}
\end{equation}
Par la construction twistorielle, toutes les m\'etriques de $\cM^{s,\ell}_\delta$ sont
n\'ecessairement lisses \`a l'int\'erieur de $M_0$, donc, \`a donn\'ee du bord
fix\'ee, les diff\'erents $\cM^{s,\ell}_\delta$ ne diff\`erent que par l'action de
diff\'eomorphismes non $C^\infty$ (aucune jauge pour l'action des
diff\'eomorphismes n'est impos\'ee dans notre construction). Par ailleurs,
varier $\ell$ permet d'avoir des donn\'ees au bord non r\'eguli\`eres, mais ce
n'est pas notre int\'er\^et principal ici.

\begin{theo}\label{th:submersion}
  L'op\'erateur $P$ est une submersion en $g_0$. Par cons\'equent, $\cM^{s,\ell}_\delta$
  est une sous-vari\'et\'e hilbertienne de $\cQ^{s,\ell}_\delta$, dont l'espace tangent
  en $g_0$ est $d(\ker d_{g_0}P)$.
\end{theo}
La seconde partie du th\'eor\`eme \ref{theo:principal} en d\'ecoule.

Le th\'eor\`eme \ref{th:submersion} est une cons\'equence de la proposition \ref{prop:inverse_a_droite}, d\'emontr\'ee dans la section suivante.

\section{Construction d'un inverse \`a droite}
\label{sec:constr-dun-inverse}

Soit $\Omega_+$ le fibr\'e des 2-formes autoduales, donc la base $(\omega_a)$
donne une trivialisation $\Omega_+=\setR^3$. Alors l'op\'erateur lin\'earis\'e
$d_{g_0}P$ s'identifie \`a la composition de $d_+$ avec la projection
$\Omega_+\otimes\Omega_+\to\Sym_0^2\Omega_+$,
\begin{equation}
  \label{eq:29}
  \partial : \cT^{s+1,\ell}_\delta \longrightarrow \cH^{s,\ell}_{\delta;0}(\Sym_0^2 \Omega_+), 
\end{equation}
o\`u l'indice $0$ marque maintenant la condition (\ref{eq:27})
lin\'earis\'ee, \`a savoir, pour $v=(v_{ab}\omega_a\otimes \omega_b)$ sym\'etrique,
\begin{equation}
  \label{eq:30}
  \int_{M_0} v_{12} \vol^{g_0} = \int_{M_0} v_{13} \vol^{g_0} = 0.
\end{equation}

On montre la surjectivit\'e de $\partial$ en consid\'erant plut\^ot le laplacien
\begin{equation}
  \label{eq:31}
  \partial\partial^*=d_+ d_+^* : \Sym_0^2 \Omega_+ \longrightarrow \Sym_0^2 \Omega_+.
\end{equation}
Puisque $\Omega_+=\setR^3$ est plat, l'op\'erateur $\partial\partial^*$ s'identifie au laplacien
scalaire $\Delta$ agissant sur chaque coefficient de la matrice
sym\'etrique. Un calcul direct donne
\begin{equation}
  \label{eq:32}
  \Delta = -x^{-1}\left(\partial_x^2+x^2X_1^2+X_2^2+X_3^2\right) + xF(\partial_x,xX_1,X_2,X_3),
\end{equation}
o\`u $F$ est un op\'erateur diff\'erentiel, impair, \`a coefficients $C^\infty$
jusqu'au bord. Le coefficient $x$ provient de la parit\'e de la m\'etrique
$g$, et, dans le cas plat, on a $F=0$.

Dans la section \ref{sec:analyse}, on montrera que le laplacien $\Delta$
sur $M_0$, consid\'er\'e sur nos espaces fonctionnels, se comporte
de mani\`ere similaire \`a un laplacien ordinaire sur une vari\'et\'e \`a
bord. A priori, on consid\`ere l'op\'erateur
$$ \Delta : x^3\cH^{s+2,\ell}_\delta \longrightarrow \cH^{s,\ell}_\delta. $$
\'Evidemment, cet op\'erateur n'est pas surjectif, car on ne peut pas
esp\'erer r\'esoudre le probl\`eme de Dirichlet, par exemple, avec une
donn\'ee de Neumann nulle aussi. On s'attend plut\^ot \`a ce qu'une solution de
$\Delta f=g$ avec $g\in \cH^{s,\ell}_\delta$ soit dans l'espace $\cH^{s+2,\ell}_{3,\delta}$,
donc avec un d\'eveloppement pr\`es de $X$ de la forme
$$ f \sim f_0 + x f_1 + x^2 f_2 + x^3 f_3 + \cdots , $$
dans lequel $f_0$ et $f_1$ sont ind\'etermin\'es, mais $f_2$ et $f_3$ sont
d\'etermin\'es formellement par $f_0$ et $f_1$ et $g=\Delta f=O(1)$, donc en
particulier 
\begin{equation}
  f_2 = -\frac 12 (X_2^2+X_3^2)f_0.\label{eq:33}
\end{equation}
(On a l'\'equation similaire sur $f_3$, avec un terme additionnel $g|_X$).

On d\'emontrera dans la section \ref{sec:analyse} (proposition
\ref{prop:laplacien-mod-poids}) que, pour $g\in \cH^{s,\ell}_\delta$, l'\'equation $\Delta f=g$
admet une unique solution $f\in \cH^{s+2,\ell}_{3,\delta}$ dans les deux cas
suivants :
\begin{itemize}
\item $f$ satisfait la condition de Dirichlet (donc $f_0=0$ et $f_2=0$ par (\ref{eq:33}), on notera $f=\cD g$ ;
\item $\int_{M_0}g \,\vol^{g_0}=0$, $\int_{M_0}f \,\vol^{g_0}=0$ et $f$ satisfait la condition de Neumann ($f_1=0$), on notera $f=\cN g$.
\end{itemize}

Construisons alors un premier inverse \`a droite, $R_1$, pour
l'op\'erateur (\ref{eq:29}) : partant d'une matrice sym\'etrique \`a trace
nulle $v=(v_{ab}\omega_a\otimes\omega_b)$, satisfaisant (\ref{eq:30}), d\'efinissons
\begin{equation}
  \label{eq:34}
  R_1 v = d_+^*w, \text{ o\`u }w=
  \begin{pmatrix}
    \cD v_{11} & \cN v_{12} & \cN v_{13} \\
    \cN v_{21} & \cD v_{22} & \cD v_{23} \\
    \cN v_{31} & \cD v_{32} & \cD v_{33}
  \end{pmatrix}.
\end{equation}

Dans cette \'ecriture, la matrice est \'ecrite dans la base des $\omega_a$ et
repr\'esente donc un \'el\'ement de $\Sym_0^2\Omega_+$.

Les conditions (\ref{eq:30}) l\'egitiment l'emploi de la solution du
probl\`eme de Neumann sur les coefficients $v_{12}$ et $v_{13}$. Il est
possible de comprendre le choix du probl\`eme de Dirichlet ou de Neumann
par la parit\'e (\ref{eq:2}) attendue pour la solution.

L'op\'erateur $R_1$ n'est qu'une premi\`ere approximation \`a l'inverse \`a
construire. En effet, $\alpha=R_1v\notin \cT^{s+1,\ell}_\delta$ en g\'en\'eral : posons
$\alpha=(\alpha_a)=d_+^*w$, o\`u $w$ est d\'efini dans (\ref{eq:34}), et
$\alpha_a=\alpha_{a,j}e^j$, alors on calcule (voir (\ref{eq:39}))
\begin{equation}
  \label{eq:35}
  \alpha_{1,1}|_X = \partial_xw_{11} + X_3w_{12} -X_2w_{13} =: \varphi .
\end{equation}
Ce terme va \^etre corrig\'e en utilisant une libert\'e de choix sur $\alpha$,
provenant de l'action infinit\'esimale des diff\'eomorphismes : si $\xi$ est
un champ de vecteurs sur $M_0$, alors $(\imath_\xi\omega_a)$ correspond \`a modifier
les $\omega_a$ par l'action infinit\'esimale de $\xi$, donc $(\imath_\xi\omega_a)\in \ker
d_{g_0}P$.

% Section \ref{sec:analyse}, on montre que la restriction au bord est
% continue,
% $$ H^s(M_0)\to\FS^{s-\frac12}(X), $$
% et qu'existe un op\'erateur de prolongement
% $$ \FS^{s-\frac12}(X)\to H^s(M_0). $$

Par construction, $\varphi\in \FS^{\ell+2+\delta}$. Appliquons l'op\'erateur de
prolongement $E_2$ pour obtenir un prolongement $\tilde \varphi=E_2\varphi\in
\cH^{s+2,\ell}_{2,\delta}$ de $\varphi$ \`a l'int\'erieur de $M_0$, dans un
voisinage de $X$, et d\'efinissons le champ de vecteurs
\begin{equation}
  \label{eq:36}
  \xi = \tilde \varphi  x^{-1}\partial_x + (X_2 \tilde \varphi) X_2 + (X_3 \tilde \varphi) X_3 .
\end{equation}
Le premier coefficient du champ $\xi$ est calcul\'e de sorte que $\imath_\xi\omega_1=\varphi
x^{-1}\theta^1$ sur $X$. La pr\'esence de coefficients de $X_2$ et $X_3$
permet de pr\'eserver la structure de contact sur $X$, voir section
\ref{sec:les-deform-infin}. Alors l'inverse \`a droite voulu est donn\'e
par :
\begin{prop}\label{prop:inverse_a_droite}
  L'op\'erateur
  \begin{equation}
    \label{eq:37}
    Rv = R_1v - (\imath_\xi\omega_a)
  \end{equation}
  est un inverse \`a droite de $d_{g_0}P:\cT^{s+1,\ell}_\delta \to \cH^{s,\ell}_{\delta;0}(\Sym_0^2 \Omega_+)$.
\end{prop}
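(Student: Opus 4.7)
Le plan est de vérifier séparément que $Rv$ satisfait l'équation image $\partial Rv = v$ et qu'il appartient à $\cT^{s+1,\ell}_\delta$. Pour l'équation image, l'observation clé est que, dans la trivialisation parallèle $(\omega_a)$ de $\Omega_+$ (parallèle car la métrique est hyperkählérienne), l'opérateur $\partial\partial^* = d_+d_+^*$ agit sur $\Sym_0^2\Omega_+$ comme le laplacien scalaire $\Delta$ sur chaque coefficient matriciel. Par la définition de $\cD$ et $\cN$ et la condition intégrale (\ref{eq:30}) (nécessaire précisément aux coefficients où $\cN$ est utilisé), on a $\Delta w_{ab} = v_{ab}$ pour chaque $(a,b)$, donc $d_+ R_1 v = v$, et puisque $v$ est déjà sans trace, $\partial R_1 v = v$. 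Le terme de correction $(\imath_\xi\omega_a)$ est dans $\ker \partial$ car $\omega_a$ est fermée, donc $d(\imath_\xi\omega_a) = L_\xi\omega_a$, et $L_\xi(\omega_a\land\omega_b) = \delta_{ab} L_\xi(\omega_1^2)$ est de pure trace. Par conséquent $\partial Rv = v$.

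Pour l'appartenance à $\cT^{s+1,\ell}_\delta$, j'invoquerais d'abord la proposition \ref{prop:laplacien-mod-poids} pour obtenir $w_{ab} \in \cH^{s+2,\ell}_{3,\delta}$, de sorte que $R_1 v = d_+^* w$ ait ses coefficients dans $\cH^{s+1,\ell}_{2,\delta}$ dans le repère $(e^i)$, ce qui correspond à la régularité prescrite dans la définition de $\cT^{s+1,\ell}_\delta$. Il resterait alors à vérifier les identités au bord du lemme \ref{lem:condition-1-formes} : l'annulation sur $X$ du coefficient sur $e^1$, la condition $i^*\alpha_1 = 0$, et l'annulation sur $X$ des trois combinaisons du premier ordre. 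Un calcul direct de $d_+^*w$ dans le repère $(e^i)$ au voisinage du bord montre que la répartition Dirichlet/Neumann (\ref{eq:34}) est faite exactement pour que, pour $a=2,3$, toutes ces identités soient satisfaites automatiquement (la parité attendue des $w_{ab}$ d'après (\ref{eq:2}) forçant les bonnes annulations), et pour $a=1$ toutes les conditions soient satisfaites sauf une : le coefficient sur $e^1$ au bord, où subsiste exactement l'obstruction $\alpha_{1,1}|_X = \varphi$ donnée en (\ref{eq:35}).

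La dernière étape consiste à tuer ce terme résiduel par la correction provenant d'un difféomorphisme infinitésimal. Puisque $w \in \cH^{s+2,\ell}_{3,\delta}$, la formule (\ref{eq:35}) montre que $\varphi \in \FS^{\ell+2+\delta}(X)$, et le lemme \ref{lem:extension} livre $\tilde\varphi = E_2\varphi \in \cH^{s+2,\ell}_{2,\delta}$, donc $\xi$ défini par (\ref{eq:36}) et $(\imath_\xi\omega_a)$ ont la régularité requise. En utilisant les expressions (\ref{eq:7}) au voisinage du bord, on calcule $\imath_\xi\omega_1 = \tilde\varphi\, e^1 + O(x)$ (le coefficient de $\partial_x$ de $\xi$ porte un facteur $x^{-1}$, compensé par le facteur $x$ de $dx\land\theta^1$ dans $\omega_1$), dont le coefficient sur $e^1$ au bord vaut précisément $\varphi$ ; tandis que pour $\omega_2$ et $\omega_3$ les coefficients sur $e^1$ s'annulent au bord. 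Les termes tangentiels $(X_2\tilde\varphi)X_2 + (X_3\tilde\varphi)X_3$ de $\xi$ sont inclus pour garantir les autres identités du lemme \ref{lem:condition-1-formes} et pour que $\xi$ préserve infinitésimalement la structure de contact sur $X$ (voir section \ref{sec:les-deform-infin}). Il en résulte que $(\imath_\xi\omega_a) \in \cT^{s+1,\ell}_\delta$ et que $Rv = R_1 v - (\imath_\xi\omega_a) \in \cT^{s+1,\ell}_\delta$.

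L'obstacle principal sera l'étape centrale : mener à bien le calcul au bord de $d_+^*w$ dans le repère $(e^i)$ et le confronter coefficient par coefficient aux trois conditions du lemme \ref{lem:condition-1-formes}. Il faut y exploiter la conception basée sur la parité de (\ref{eq:34}) pour vérifier qu'il subsiste exactement une obstruction au bord, et que celle-ci appartient bien à l'espace de Folland-Stein $\FS^{\ell+2+\delta}(X)$ auquel l'opérateur d'extension $E_2$ s'applique. Une fois cette comptabilité au bord achevée, le reste de l'argument est structurel et la proposition s'en déduit.
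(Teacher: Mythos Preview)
Votre démarche est celle de l'article : l'identité $\partial Rv=v$ (que l'article tient pour acquise), puis la vérification de l'appartenance à $\cT^{s+1,\ell}_\delta$ via la régularité issue de la proposition~\ref{prop:laplacien-mod-poids} et les identités au bord du lemme~\ref{lem:condition-1-formes}, le calcul explicite de $d_+^*w$ près de $X$ constituant le c\oe ur de la preuve.

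Deux imprécisions à corriger. D'abord, l'affirmation finale $(\imath_\xi\omega_a)\in\cT^{s+1,\ell}_\delta$ est fausse : le coefficient de $\imath_\xi\omega_1$ sur $e^1$ vaut $\tilde\varphi$, non nul sur $X$. Ni $R_1v$ ni $(\imath_\xi\omega_a)$ n'est dans $\cT$ ; seule leur différence l'est. Ensuite, votre \og sauf une\fg\ pour $a=1$ est inexact : avant correction, $\eta_1$ viole aussi les deux premières identités du lemme~\ref{lem:condition-1-formes}, qui valent respectivement $-X_3\varphi$ et $X_2\varphi$ sur $X$ (c'est ce que montre la formule~(\ref{eq:41}) une fois $\varphi$ remplacée par sa valeur). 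Les termes tangentiels $(X_b\tilde\varphi)X_b$ de $\xi$ introduisent dans $\imath_\xi\omega_1$ des coefficients $\mp xX_j\tilde\varphi$ sur $\theta^2,\theta^3$ dont les dérivées normales reproduisent exactement ces mêmes défauts, de sorte que les trois obstructions s'annulent simultanément dans $\alpha_1=\eta_1-\imath_\xi\omega_1$. Votre remarque sur le rôle de ces termes tangentiels est donc juste, mais elle contredit le décompte précédent.

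Ce sont des glissements de comptabilité ; l'architecture est correcte et la preuve de l'article consiste précisément à mener le \og calcul direct\fg\ que vous esquissez, formules~(\ref{eq:39})--(\ref{eq:42}).
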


Le reste de cette section est consacr\'e \`a la d\'emonstration de la
proposition. On sait que $R$ est un inverse \`a droite, le probl\`eme
ici est de v\'erifier qu'il s'agit d'un op\'erateur entre les espaces
sp\'ecifi\'es.

Nous commen\c cons par calculer $\eta=d^*w$. Nous avons $\eta_b=\sum_1^3 I_a
dw_{ab}$. Pour obtenir le comportement pr\`es de $X$, observons que le
comportement asymptotique (\ref{eq:7}) implique que
$(e^0,e^1,e^2,e^3)$ est une base quaternionienne standard le long de
$X$, donc
\begin{equation}
  \label{eq:38}
  \begin{cases}
    J_ae^0=e^a + O(x^2), & a=1,2,3, \\
    J_ae^b=\epsilon_{abc}e^c + O(x^2), & (abc) \text{ permutation de }(123).
  \end{cases}
\end{equation}
Ici $O(x^2)$ vise les coefficients dans la base $(e^i)$ ; cette
d\'ecroissance provient de la propri\'et\'e de parit\'e (\ref{eq:2}) de la
m\'etrique.

Les coefficients de $\eta_b$ dans la base $(e^i)$ sont automatiquement dans l'espace $\cH^{s+1,\ell}_{2,\delta}$. Explicitons les deux premiers termes : la restriction \`a $X$ et la d\'eriv\'ee normale. Un calcul direct donne, modulo $x^2\cH^{s+1,\ell}_\delta$,
\begin{equation}\label{eq:39}
  \begin{split}
    \eta_b = & (-xX_1w_{1b}-X_2w_{2b}-X_3w_{3b}) dx + (\partial_xw_{1b}+X_3w_{2b}-X_2w_{3b}) x^{-1}\theta^1 \\
    & + (-X_3w_{1b}+\partial_xw_{2b}+xX_1w_{3b}) \theta^2 + (X_2w_{1b}-xX_1w_{2b}+\partial_xw_{3b}) \theta^3.
  \end{split}
\end{equation}

Analysons maintenant les $$\alpha_b=\eta_b-\iota_\xi\omega_b$$ pour v\'erifier les conditions du lemme \ref{lem:condition-1-formes}. R\'ecrivons, toujours modulo $x^2\cH^{s+1,\ell}_\delta$,
\begin{equation}\label{eq:40}
  \begin{split}
    \alpha_1 = & (-X_2w_{21}-X_3w_{31}) dx \\
    & + (\partial_xw_{11}+X_3w_{21}-X_2w_{31} - \tilde \varphi) x^{-1}\theta^1 \\
    & + (-X_3w_{11}+\partial_xw_{21}+xX_1w_{31}+xX_3\tilde \varphi) \theta^2 \\
    & + (X_2w_{11}-xX_1w_{21}+\partial_xw_{31}-xX_2\tilde \varphi) \theta^3.
  \end{split}
\end{equation}
Observons que la restriction \`a $X$ du coefficient de $\eta_1$ sur $e^1=x^{-1}\theta_1$ est exactement la fonction $\varphi$ d\'efinie par (\ref{eq:35}), donc le coefficient de $\alpha_1$ sur $e^1=x^{-1}\theta^1$ est nul, et en outre
$$ (\partial_x\alpha_{1,1})|_X=\partial_x^2w_{11}+\partial_xX_3w_{21}-\partial_xX_2w_{31}=0 $$
car $w_{21}$ et $w_{31}$ satisfont la condition de Neumann, et la condition de Dirichlet pour $w_{11}$ donne avec (\ref{eq:33}) l'annulation $\partial_x^2w_{11}|_X=0$. Les conditions au bord impliquent aussi que $\alpha_{1,2}|_X=\alpha_{1,3}|_X=0$, donc finalement $i^*\alpha_1=0$.

V\'erifions en outre pour $\alpha_1$ les annulations requises par le lemme \ref{lem:condition-1-formes} : en rempla\c cant $\varphi$ par sa valeur,
\begin{equation}
  \label{eq:41}
  (\partial_x\alpha_{1,2}-X_2\alpha_{1,0})|_X = (\partial_x^2+X_2^2+X_3^2)w_{21} + (X_1+X_2X_3-X_3X_2)w_{31}
\end{equation}
 qui est nulle, \`a cause de $X_1=-[X_2,X_3]$ et de la contrainte (\ref{eq:33}) sur $w_{21}$ ; la seconde annulation est similaire, et la derni\`ere est une cons\'equence de $i^*\alpha_1=0$ que nous avons d\'ej\`a vue.

Passons \`a $\alpha_2$ (le cas de $\alpha_3$ est similaire) : partons de la
formule, modulo des termes dans $x^2\cH^{s+1,\ell}_\delta$,
\begin{equation}
  \label{eq:42}
  \begin{split}
    \alpha_2 = &  (-xX_1w_{12}-X_2w_{22}-X_3w_{32}+xX_2\tilde \varphi) dx \\
    & + (\partial_xw_{12}+X_3w_{22}-X_2w_{32}-xX_3\tilde \varphi) x^{-1}\theta^1 \\
    & + (-X_3w_{12}+\partial_xw_{22}+xX_1w_{32}-\tilde \varphi) \theta^2 \\
    & + (X_2w_{12}-xX_1w_{22}+\partial_xw_{32}) \theta^3 .
  \end{split}
\end{equation}
Les conditions au bord donnent bien l'annulation sur $X$ du coefficient de $e^1=x^{-1}\theta^1$. Les deux premi\`eres annulations requises par le lemme \ref{lem:condition-1-formes} sont \'evidentes, et la derni\`ere r\'esulte d'un calcul direct. Cela conclut la preuve de la proposition \ref{prop:inverse_a_droite}.\qed

% Pour une fonction $f$, on a $d_+d_+^*(f\omega_1)=-d_+d^Cf=\frac12 (\Delta f)\omega_1$. La proposition \ref{prop:inverse_a_droite} et sa d\'emonstration ont alors la cons\'equence suivante.
% \begin{coro}
%   Soit une fonction $g\in \cH^{s,\ell}_\delta$. Alors il existe un unique couple $(f,\varphi)$, o\`u $f\in \cH^{s+2,\ell}_\delta$ et $\varphi\in \FS^{\ell+3+\delta}$ tel que 
% \end{coro}

\section{Les d\'eformations infinit\'esimales}
\label{sec:les-deform-infin}

Il r\'esulte du th\'eor\`eme \ref{th:submersion} que l'espace tangent \`a
$\cM^s$ est constitu\'e des diff\'erentielles ext\'erieures des triplets de
1-formes $(\eta_a)$, \`a coefficients $\cH^{s+1,\ell}_{2,\delta}$ dans la base $(e^i)$, tels
que $i^*\eta_1=0$, $d\eta_a\in x\cH^{s,\ell}_\delta$, et satisfaisant les \'equations
\begin{align}
  \omega^c \land d\eta^c & = 0 , \label{eq:43} \\
  \omega_1 \land d\eta^c + d\eta_1 \land \omega^c & = 0 , \label{eq:44} \\
  \omega_1 \land d\eta_1 + \frac12 \re (\omega^c \land d\overline{\eta^c}) & = 0. \label{eq:45}
\end{align}
Ici on a not\'e $\eta^c=\eta_2+i\eta_3$.

La premi\`ere \'equation, (\ref{eq:43}), dit juste que $d\eta^c$ est une
d\'eformation infinit\'esimale de la structure symplectique holomorphe
$\omega^c$. La condition au bord ($i^*d\eta^c$ verticale) dit que la structure
complexe continue \`a pr\'eserver la distribution de contact $H\subset TX$.

La deuxi\`eme \'equation, (\ref{eq:44}), est une condition de
compatibilit\'e de la forme de K\"ahler $\omega_1$ \`a la structure
complexe. Elle se r\'ecrit en termes de la partie de type (0,1) pour $J_1$ :
\begin{equation}
  \label{eq:46}
  \db(\eta_1^{0,1}\land \omega^c) = - \omega_1 \land d\eta^c.
\end{equation}

\begin{lemm}
  \'Etant donn\'e $\eta^c$, l'\'equation (\ref{eq:46}) a toujours des solutions
  $\eta_1$.
\end{lemm}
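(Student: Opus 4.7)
The plan is to reduce equation (\ref{eq:46}) to a scalar Laplace equation and solve it using the Laplacian inverse from section \ref{sec:constr-dun-inverse}. First I would note that the hypothesis (\ref{eq:43}), combined with the non-degeneracy of $\omega^c$, is equivalent to $(d\eta^c)^{0,2}_{J_1}=0$; hence $d\eta^c$ is of type $(2,0)+(1,1)$ for $J_1$. In the left-hand side of (\ref{eq:46}), only the $(0,2)$-part of $d\eta_1$ survives the wedge with $\omega^c$ (the $(2,0)$- and $(1,1)$-parts produce $(4,0)$- and $(3,1)$-components, which vanish in complex dimension~$2$), so $d\eta_1\land\omega^c=\db\eta_1^{0,1}\land\omega^c$. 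Since $\wedge\omega^c:\Omega^{0,2}\to\Omega^{2,2}$ is a pointwise isomorphism by non-degeneracy of $\omega^c$, equation (\ref{eq:46}) reduces to
\[ \db\eta_1^{0,1}=\beta, \]
where $\beta\in\Omega^{0,2}$ is the unique $(0,2)$-form satisfying $\beta\land\omega^c=-\omega_1\land d\eta^c$.

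Next, because $\overline{\omega^c}$ is parallel for the hyperk\"ahlerian structure, it trivialises the rank-one bundle $\Omega^{0,2}_{J_1}$, and I can write $\beta=h\,\overline{\omega^c}$ for a complex function $h$. I would seek $\eta_1$ in the form $\eta_1=\psi+\bar\psi$ with $\psi=\db^*(f\,\overline{\omega^c})$ for some complex function $f$; then $\eta_1$ is real with $\eta_1^{0,1}=\psi$. The equation becomes $\db\db^*(f\,\overline{\omega^c})=h\,\overline{\omega^c}$. Since $(0,3)$-forms vanish in complex dimension~$2$, $\db$ acts by zero on $(0,2)$-forms, so $\db\db^*=\Delta_{\db}=\tfrac12\Delta$ by the K\"ahler identity; and because $\overline{\omega^c}$ is parallel one has $\Delta(f\,\overline{\omega^c})=(\Delta f)\overline{\omega^c}$. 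The whole equation therefore collapses to the scalar Laplace equation $\Delta f=2h$.

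This scalar equation is precisely what the Laplacian analysis of section \ref{sec:constr-dun-inverse} solves: for $h\in\cH^{s,\ell}_\delta$ one takes the Dirichlet solution $f=2\cD h\in\cH^{s+2,\ell}_{3,\delta}$, and sets $\eta_1=\db^*(f\,\overline{\omega^c})+\overline{\db^*(f\,\overline{\omega^c})}$, a real $1$-form with coefficients in $\cH^{s+1,\ell}_{2,\delta}$ in the basis $(e^i)$.

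The step I expect to be the main obstacle is verifying that the Dirichlet choice for $f$ (so that $f_0=0$ and, by the constraint (\ref{eq:33}), also $f_2=0$) translates into the correct boundary behavior for $\eta_1$: namely $i^*\eta_1=0$ and $d\eta_1\in x\,\cH^{s,\ell}_\delta$. This requires an explicit expansion of $\db^*(f\,\overline{\omega^c})$ in the coframe $(e^i)$ near $X$, using the boundary asymptotics (\ref{eq:7}) and (\ref{eq:38}); I expect the vanishings of $f_0$ and $f_2$ to propagate to the required vanishings of $\eta_1$ along $X$, in the same spirit as the verifications following (\ref{eq:40}) in the proof of Proposition \ref{prop:inverse_a_droite}.
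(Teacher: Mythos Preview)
Your reduction to a scalar Laplace equation is exactly the paper's: the ansatz $\eta_1^{0,1}=\db^*(f\,\overline{\omega^c})$ coincides (up to normalisation) with the paper's $\eta_1^{0,1}=J_2\partial f$, and the resulting equation $\Delta f = 2h$ is the paper's equation~(\ref{eq:49}). The divergence is in the choice of boundary condition for $f$, and this is where your proposal has a genuine gap.

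With the Dirichlet solution, $f_0=f_2=0$ but the Neumann datum $f_1=\partial_xf|_X$ is free and generically nonzero. Near $X$ the formula~(\ref{eq:50}) gives the coefficient of $\theta^2-i\theta^3$ in $\eta_1^{0,1}$ as $(\partial_x-ixX_1)f$, whose restriction to $X$ equals $f_1$. Hence $i^*\eta_1$ has nonvanishing $\theta^2,\theta^3$ components and the condition $i^*\eta_1=0$ fails. The vanishings of $f_0$ and $f_2$ kill the $dx$ and $x^{-1}\theta^1$ contributions, but not this one; so your expectation that ``the vanishings propagate'' is not correct here. Moreover the natural correction by $\db(2xh)$ used in the paper only modifies the $(dx-ix^{-1}\theta^1)$ coefficient at order~$0$ and the $(\theta^2-i\theta^3)$ coefficient at order~$x$ (see~(\ref{eq:51})), so it cannot cancel $f_1$. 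Cancelling it would require adding $\db g$ with $g|_X\neq 0$, which forces one to solve a tangential $\db_H$-equation on $X$ with a cokernel---precisely the complication the paper avoids.

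This is why the paper uses the \emph{Neumann} inverse instead: one first notes $\int_{M_0}\omega_1\wedge d\eta^c=0$ (so $\cN$ applies), takes $f=\cN(-2\Lambda d\eta^c)$, and then $f_1=0$ makes the $(\theta^2-i\theta^3)$ coefficient vanish on $X$. The remaining defect sits in the $(dx-ix^{-1}\theta^1)$ coefficient (coming from $(X_2-iX_3)f_0$), and \emph{that} one is removable by adding $\db(2xh)$ with $h=E_2((X_2-iX_3)f|_X)$, after which the verifications~(\ref{eq:52})--(\ref{eq:53}) go through using $(\partial_x^2+X_2^2+X_3^2)f|_X=0$ and $X_1=-[X_2,X_3]$. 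If you switch to Neumann and include this explicit $\db(2xh)$ correction, your write-up becomes the paper's proof.
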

Ce lemme s'interpr\`ete en disant que, au moins au niveau infinit\'esimal, la classe $\zeta_1\in H^2(M_0,X)$ demeure de K\"ahler pour la d\'eformation infinit\'esimale donn\'ee par $\eta^c$. Cela est plausible car la forme $\omega^c$, de type (2,0), non nulle au bord, ne contribue pas \`a la cohomologie relative $H^2(M_0,X)$.
\begin{proof}
  Plut\^ot que de r\'esoudre (\ref{eq:46}), on se ram\`ene \`a un laplacien en
  consid\'erant l'\'equation
  \begin{equation}
    \label{eq:47}
    \db\db^*(f \vol^{g_0}) = \frac12 \omega_1 \land d\eta^c ,
  \end{equation}
  dont une solution $f$ produit une solution de (\ref{eq:46}) en
  posant
  \begin{equation}
    \label{eq:48}
    \eta^{0,1} = J_2 \partial f.
  \end{equation}
  R\'ecrivons l'\'equation (\ref{eq:47}) comme
  \begin{equation}
    \label{eq:49}
    \Delta f = -2 \Lambda d\eta^c.
  \end{equation}
  Or $\int_{M_0} \omega_1\land d\eta^c=0$, donc on peut prendre pour
  $f$ la solution du probl\`eme de Neumann. Comme les coefficients
  de $d\eta^c$ dans la base $(e^i\land e^j)$ sont dans $x\cH^{s,\ell}_\delta$, on a
  $\Lambda d\eta^c\in \cH^{s,\ell}_\delta$, et donc $f\in \cH^{s+2,\ell}_{3,\delta}$. Modulo des termes dans
  $x^2\cH^{s+1,\ell}_\delta$, on obtient 
  \begin{equation}
    \label{eq:50}
    J_2 \partial f = \big\{(\partial_x-ixX_1)f\big\} (\theta^2-i\theta^3) - \big\{(X_2-iX_3)f\big\} (dx-ix^{-1}\theta^1).
  \end{equation}
  Nous devons maintenant v\'erifier que $J_2 \partial f$ satisfait les
  conditions au bord voulues. Gr\^ace \`a la condition de Neumann, le
  premier coefficient $(\partial_x-ixX_1)f\in x\cH^{s+1,\ell}_\delta$. A priori, le second
  coefficient n'a pas de raison de s'annuler : ici on utilise le fait
  que l'\'equation \`a r\'esoudre est (\ref{eq:46}), donc on peut modifier
  $J_2 \partial f$ par un terme $\db g$. On pose alors
  $$ \eta^{0,1}=J_2 \partial f + \db (2xh), \quad h = E_2\big((X_2-iX_3)f|_X\big)\in \cH^{s+1,\ell}_{2,\delta}. $$
  Nous obtenons alors $\eta^{0,1}=a(dx-ix^{-1}\theta^1)+b(\theta^2-i\theta^3)$ modulo $x^2\cH^{s+1,\ell}_\delta$, avec
  \begin{equation}\label{eq:51}
  \begin{split}
    a &= -(X_2-iX_3)f + h + x(\partial_x+ixX_1) h \\
    b &= (\partial_x-ixX_1)f + x(X_2+iX_3)f.
  \end{split}
  \end{equation}
  \'Ecrivons $\eta=\re(a)dx+\im(a)x^{-1}\theta^1+\re(b)\theta^2+\im(b)\theta ^3$ ; la condition au bord $i^*\eta=0$ s'\'ecrit
  \begin{equation}
    \label{eq:52}
    \im(a)=\partial_x\im(a)=0, \quad b=0,
  \end{equation}
  tandis que les conditions du lemme \ref{lem:condition-1-formes} sont
  \begin{equation}
    \label{eq:53}
    \partial_x\re(b)-X_2\re(a)=0, \quad \partial_x\im(b)-X_3\re(a)=0.
  \end{equation}
  \`A partir de (\ref{eq:51}), les conditions (\ref{eq:52}) sont imm\'ediates, et on a m\^eme au bord $a=0$. La condition (\ref{eq:53}) se r\'eduit donc \`a $\partial_xb|_X=0$, et on calcule
  \begin{align*}
    \partial_xb &= \partial_x^2f-iX_1f+(X_2+iX_3)(X_2-iX_3)f \\
        &= (\partial_x^2+X_2^2+X_3^2)f-i(X_1+X_2X_3-X_3X_2)f.
  \end{align*}
  Comme $\Delta f\in \cH^{s,\ell}_\delta$, il faut que $(\partial_x^2+X_2^2+X_3^2)f$ s'annule sur $X$ ; le second terme s'annule aussi puisque $X_1=-[X_2,X_3]$.
\end{proof}

Supposons donn\'ee maintenant une solution $(\eta_1,\eta^c)$ de (\ref{eq:43}) et (\ref{eq:44}). Consid\'erons fix\'ee la variation de structure symplectique holomorphe, repr\'esent\'ee par $\eta^c$, et tentons de modifier $\eta_1$ de sorte de r\'esoudre aussi la derni\`ere \'equation (\ref{eq:45}), tout en pr\'eservant (\ref{eq:44}) : on a donc la flexibilit\'e de modifier $\eta_1^{0,1}$ par un terme $\db g$. Discutons la condition au bord sur $g$ : il faut $i^*\eta_1=0$ et donc $\db_Hg=0$ sur $X$, c'est-\`a-dire que $g|_X$ est une fonction holomorphe au sens CR sur $X$. 

Dans un premier temps, nous allons discuter uniquement les d\'eformations telles que $g|_X=0$, et nous \'etudierons les d\'eformations r\'esiduelles dans la section suivante.

 Si $g$ est r\'eelle, alors $\eta_1$ est modifi\'ee par $dg$ ce qui ne modifie pas $\omega_1$ ; en revanche, si $g=if$ est imaginaire pure, alors $\eta_1$ est modifi\'ee par $d^Cf$, et l'\'equation (\ref{eq:45}) devient
\begin{equation}
  \label{eq:54}
  \Lambda dd^Cf = -\frac12 \re(\omega^c\land d\overline{\eta^c}) - \Lambda d\eta_1.
\end{equation}
Ce n'est rien d'autre que la lin\'earisation de l'\'equation de Monge-Amp\`ere \`a r\'esoudre pour obtenir une m\'etrique k\"ahl\'erienne Ricci plate.

On peut r\'esoudre (\ref{eq:54}) avec la condition de Dirichlet $f|_X=0$. Cette solution satisfait $f\in x\cH^{s+2,\ell}_{2,\delta}$ et \`a nouveau, modulo des termes dans $x^2\cH^{s+1,\ell}_\delta$, la modification de $\eta_1$ est, pr\`es de $X$,
\begin{equation}
  \label{eq:55}
  d^Cf = - xX_1f dx + (\partial_xf) x^{-1}\theta^1 - X_3f \theta^2 + X_2f \theta^3.
\end{equation}
Tous les coefficients s'annulent sur $X$, sauf le terme singulier :
\begin{equation}
  \label{eq:56}
  (\partial_xf) x^{-1}\theta^1 .
\end{equation}
Comme on a vu, on peut encore modifier $\eta_1$ par un terme $dg$, mais cela ne permet pas de compenser ce terme singulier. Il y a donc une obstruction \`a r\'esoudre le probl\`eme : il n'existe pas pour tout $\eta^c$ de solution $\eta_1=d^Cf$ ($f|_X=0$) du probl\`eme (\ref{eq:44})--(\ref{eq:45}).

N\'eanmoins, on peut \'eliminer le terme (\ref{eq:56}) par un diff\'eomorphisme infinit\'esimal agissant sur le triplet $(\omega_a)$ : posant $\psi=E_2(\partial_xf|_X)\in \cH^{s+2,\ell}_{2,\delta}$, on d\'efinit le champ de vecteurs
\begin{equation}
  \label{eq:57}
  \xi = \psi x^{-1}\partial_x + (X_2 \psi)X_2 + (X_3 \psi)X_3 .
\end{equation}
\begin{lemm}\label{lem:1-formes}
  Les 1-formes $(\eta^c-\imath_\xi\omega^c,\eta_1+d^Cf-\imath_\xi\omega_1)$ sont solutions du
  syst\`eme (\ref{eq:43})--(\ref{eq:45}) et satisfont les conditions au bord.
\end{lemm}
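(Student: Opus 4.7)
The argument divides naturally into two parts: verifying that the modifications preserve the equations (\ref{eq:43})--(\ref{eq:45}), and checking the boundary conditions required by Lemma \ref{lem:condition-1-formes}. For the first part, subtracting $\imath_\xi\omega_a$ is an infinitesimal diffeomorphism of the triplet $(\omega_a)$, so it lies in $\ker d_{g_0}P$ and does not affect the equations. Adding $d^Cf$ to $\eta_1$ leaves (\ref{eq:43}) unchanged; modifies the left-hand side of (\ref{eq:44}) by $dd^Cf\land \omega^c$, which vanishes identically because $dd^Cf=2i\partial\db f$ is of type $(1,1)$ for $J_1$ while $\omega^c$ is of type $(2,0)$, so the wedge is of type $(3,1)$ and hence zero on a 4-real-dimensional manifold; and makes (\ref{eq:45}) hold, since in four real dimensions (\ref{eq:45}) is a scalar 4-form equation, equivalent to its $\Lambda$-trace (\ref{eq:54}), which is precisely the equation solved by $f$.

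For the boundary analysis, observe from (\ref{eq:55}) that the only coefficient of $d^Cf$ in the basis $(e^i)$ that does not vanish along $X$ is the singular term $(\partial_xf)\,x^{-1}\theta^1$. Using the leading form (\ref{eq:7}) of $\omega_1$ near $X$, a direct computation yields
$$ \imath_\xi\omega_1 = \psi\,x^{-1}\theta^1 + x(X_2\psi)\theta^3 - x(X_3\psi)\theta^2 + O(x^2), $$
so that, since $\psi|_X=\partial_xf|_X$, the singular term of $d^Cf$ is cancelled exactly by the corresponding term of $\imath_\xi\omega_1$. Likewise, from the leading form $\omega^c\approx (x\,dx+i\theta^1)\land(\theta^2+i\theta^3)$ near $X$ obtained from (\ref{eq:7}), one checks that the $e^1$-coefficient of $\imath_\xi\omega^c$ is of order $O(x)$, so subtracting it preserves the required condition that the $e^1$-coefficient of $\eta^c$ vanish on $X$. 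Combining these gives $i^*(\eta_1+d^Cf-\imath_\xi\omega_1)=0$ and the expected bulk regularity.

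The remaining tangential-derivative conditions of Lemma \ref{lem:condition-1-formes} are then obtained by a direct computation on the first-order Taylor coefficients at the boundary, structurally identical to the verification carried out for (\ref{eq:40})--(\ref{eq:42}) in the previous section; the principal ingredients are the Dirichlet condition $f|_X=0$, the trace identity $\partial_x^2f|_X=-(X_2^2+X_3^2)f|_X$ coming from $\Delta f\in \cH^{s,\ell}_\delta$ (the analogue of (\ref{eq:33}) for the solution of (\ref{eq:54})), and the structural identity $X_1=-[X_2,X_3]$. The main obstacle is bookkeeping — tracking how the extra $\imath_\xi$ contributions enter the expressions analogous to (\ref{eq:41}) — but the pattern of cancellations reproduces exactly that of Section \ref{sec:constr-dun-inverse}, so no genuinely new analytic difficulty arises.
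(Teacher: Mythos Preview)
Your proposal is correct and follows essentially the same approach as the paper. The paper's own proof is in fact terser: it records only the two explicit formulas $d^Cf-\imath_\xi\omega_1$ and $\imath_\xi\omega^c$ modulo $O(x^2)$ and declares the verification of the conditions of Lemma~\ref{lem:condition-1-formes} to be immediate, whereas you spell out why the equations (\ref{eq:43})--(\ref{eq:45}) are preserved and indicate the mechanism (Dirichlet, the trace identity, and $X_1=-[X_2,X_3]$) behind the boundary cancellations.
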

\begin{proof}
  Il reste juste \`a v\'erifier les conditions au bord. Modulo des termes $O(x^2)$, on a
  \begin{align*}
    d^Cf-\iota_\xi\omega_1 = & -xX_1f dx + (\partial_xf-\psi)x^{-1}\theta^1 \\ &- X_3(f-x\psi)\theta^2 + X_2(f-x\psi)\theta^3, \\
     \imath_\xi\omega^c = & - x(X_2+iX_3)\psi (dx + i x^{-1}\theta^1) + \psi (\theta^2+i\theta^3) ,
  \end{align*}
  et les conditions du lemme \ref{lem:condition-1-formes} se v\'erifient facilement.
\end{proof}
Bien entendu, l'annulation qui vient d'\^etre montr\'ee est la raison de la pr\'esence des termes $(X_b\psi) X_b$ dans la formule (\ref{eq:57}), comme dans (\ref{eq:36}).

Synth\'etisons ce que nous venons de d\'emontrer. \'Etant donn\'ee une d\'eformation
holomorphe symplectique infinit\'esimale $d\eta^c$, nous pouvons compl\'eter $\eta^c$ en une solution du syst\`eme (\ref{eq:43})--(\ref{eq:45}), \`a la condition d'autoriser
l'action de diff\'eomorphismes infinit\'esimaux comme dans
(\ref{eq:57}). Or, le changement de variable $y=x^2/2$ ($x>0$) fait
disparaitre la singularit\'e de la structure holomorphe symplectique sur
$X$, et $x^{-1}\partial_x=\partial_y$, donc nous voyons que cela correspond \`a
d\'eplacer infinit\'esimalement le bord de $M_s$ ; en outre, la fonction
$\psi$ dans (\ref{eq:57}) \'etant parfaitement d\'etermin\'ee, le d\'eplacement
infinit\'esimal est uniquement d\'etermin\'e. Cela ach\`eve la d\'emonstration
de la premi\`ere partie du th\'eor\`eme \ref{theo:principal}, et justifie la
question pos\'ee dans l'introduction, qui est une question de r\'esolution
du probl\`eme de Monge-Amp\`ere dans cette situation, avec fronti\`ere
libre.

Concluons en remarquant que cette question se comprend tr\`es bien dans
le formalisme de Donaldson, consistant \`a chercher une m\'etrique
K\"ahler-Einstein dans l'orbite complexifi\'ee du groupe des
symplectomorphismes. Dans notre situation, il faut penser au groupe
des symplectomorphismes comme induisant au bord un
contactomorphisme. Or la complexification d'un contactomorphisme
infinit\'esimal d\'eplace n\'ecessairement le bord : en fait, le champ $\xi$
dans (\ref{eq:57}) n'est autre que le complexifi\'e du contactomorphisme
infinit\'esimal
$$ - \psi X_1 + (X_3\psi)X_2 - (X_2\psi)X_3 . $$
Comme on l'a vu, les contactomorphismes complexifi\'es continuent \`a
induire la structure de contact fix\'ee $H$ sur $X$. Voyant ainsi $X\subset N$
comme hypersurface dans une vari\'et\'e holomorphe symplectique (sans
singularit\'e), il est naturel d'identifier la complexification du
groupe des contactomorphismes aux plongements $\phi:X\to N$ tels que la
structure CR induite par $\phi$ sur $X$ demeure d\'efinie sur la structure
de contact $H$. Les champs de vecteurs (\ref{eq:57}) en sont
exactement la version infinit\'esimale.

\section{Les d\'eformations infinit\'esimales de Hitchin}
\label{sec:lespace-des-modules-1}

Examinons \`a pr\'esent les solutions additionnelles du syst\`eme (\ref{eq:43})-~(\ref{eq:45}) provenant d'un potentiel non nul sur le bord $X$. Comme on a vu, la modification de $\eta^{0,1}$ par $\db g$ ne modifie pas les \'equations (\ref{eq:43}) et (\ref{eq:44}), et la condition au bord $i^*\eta_1=0$ exige $\db_H(g|_X)=0$.

Nous restreignons la discussion au cas mod\`ele, sur le fibr\'e en disques de $T^*\Sigma$. Si $\omega_\Sigma$ est la forme de K\"ahler \`a courbure $-1$ sur $\Sigma$, alors il r\'esulte des formules dans \cite{BiqGau97} qu'on peut \'ecrire la m\'etrique hyperk\"ahl\'erienne pli\'ee par la formule
\begin{equation}
  \label{eq:58}
  \omega_1 = \sqrt{1-r^2} p^*\omega_\Sigma + \frac1{\sqrt{1-r^2}} r dr \land \eta,
\end{equation}
o\`u $p:T^*\Sigma\to\Sigma$ est la projection, $r$ est la distance dans la fibre de $T^*\Sigma$, et $\eta$ est la 1-forme de connexion sur le fibr\'e en cercles, donc $d\eta=-\omega_\Sigma$. (On a $\eta=-\theta^1$, o\`u $\theta^1$ est la forme d\'efinie section \ref{sec:la-geometrie-au}).

Soit $g\in \FS^{\ell+3+\delta}$ une fonction CR-holomorphe sur $X$, alors on peut r\'esoudre le probl\`eme de Dirichlet
\begin{equation}
 \Delta \tilde g=0, \quad \tilde g|_X=g,\label{eq:59}
\end{equation}
avec $\tilde g\in \cH^{s+2,\ell}_{3,\delta}$. Posant $\eta_1=\re (\db \tilde g)$, on obtient
$$ \omega_1\land d\eta_1 = \re (\omega_1\land \partial\db \tilde g) = 0, $$
donc on obtient une solution infinit\'esimale $(\eta^c=0,\eta_1=\re(\db \tilde g))$ du syst\`eme lin\'earis\'e (\ref{eq:43})-(\ref{eq:45}). Ces solutions ne modifient pas la structure holomorphe symplectique, au moins infinit\'esimalement.

De mani\`ere explicite, les fonctions CR-holomorphes sur $X$ ont une
d\'ecomposition en s\'eries de Fourier, $g=\sum_{n\leq 0}g_n$ et $g_n\in
H^0(\Sigma,K^{-n})$. Pour $n=0$, la fonction $g_0$ est constante, donc son
extension $\tilde g_0$ aussi, donc $\eta_1=0$. Supposons donc $n<0$.
 Sans rentrer dans le d\'etail des calculs, on voit que $\Delta$ pr\'eserve les fonctions du type $a(r)g_n$, donc l'extension $\tilde g_n$ de $g_n$ est du m\^eme type. Il en r\'esulte aussi que $\db\tilde g_n$ est encore du m\^eme type :
\begin{equation}
  \label{eq:60}
  \db \tilde g_n = \varphi_n(r) g_n \big(\frac{dr}r - i \eta\big).
\end{equation}
\`A partir de l\`a, il est facile d'expliciter $\tilde g$ mais nous n'avons pas besoin de la formule pr\'ecise. L'\'equation $\Lambda\partial\db \tilde g_n=0$ m\`ene rapidement \`a $\varphi_n(r)=\Phi_n\frac{r^{-n}}{\sqrt{1-r^2}}$ pour une certaine constante $\Phi_n$. Finalement, on obtient
\begin{equation}
 \db \tilde g = -\iota_\xi\omega_1\label{eq:61}
\end{equation}
avec
\begin{equation}
  \label{eq:62}
    \xi=\sum_{n\leq 0}\xi_n, \quad \xi_n = 2i \Phi_n g_n \frac{r^{-n}}{\bar w}\frac \partial{\partial w},
\end{equation}
o\`u $w\partial_w$ est le champ de vecteurs (holomorphe) d'homoth\'etie dans les
fibres. La fonction $r^{-n}g_n/\bar w$ co\"\i ncide, \`a une constante pr\`es,
avec $\bar w^{-n-1}$ sur chaque disque.

Pour v\'erifier nos conditions au bord, transformons la solution
pr\'ec\'edente par l'action infinit\'esimale de $\xi$ pour obtenir plut\^ot la
solution $(\eta^c=\iota_\xi\omega^c,\eta_1=0)$. Remarquant que pour $n=-1$, le champ
$\xi_1$ pr\'eserve $\omega^c$, on obtient :
\begin{prop}\label{prop:Hitchin-infinitesimal}
  Pour toute fonction $g\in \FS^{\ell+3+\delta}$ CR-holomorphe sur $X$,
  d\'efinissons $\xi$ par (\ref{eq:61}), o\`u $\tilde g$ est l'extension
  harmonique de $g$, alors $(\eta^c=\iota_\xi\omega^c,\eta_1=0)$ d\'efinit un vecteur
  tangent \`a $\cM_\delta^{s,\ell}$, c'est-\`a-dire est une solution
  infinit\'esimale des \'equations et satisfait les conditions au bord. La
  variation est non triviale pour les fr\'equences diff\'erentes de $0$ et
  $-1$, donc on obtient des d\'eformations infinit\'esimales param\'etr\'ees par
  \begin{equation}\label{eq:63}
    \oplus_{n\leq -2} H^0(\Sigma,K^{-n}).
  \end{equation}\qed
\end{prop}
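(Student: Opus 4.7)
The plan is to construct the tangent vector in three stages: solve an auxiliary Dirichlet problem, verify the linearized equations for an obvious real-valued representative, and then transfer to the claimed form by the infinitesimal diffeomorphism $\xi$ of (\ref{eq:61}), at which point the boundary conditions become transparent. Finally I analyse which frequencies contribute non-trivially.

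First, proposition \ref{prop:laplacien-mod-poids} supplies a unique $\tilde g \in \cH^{s+2,\ell}_{3,\delta}$ solving $\Delta \tilde g = 0$ with $\tilde g|_X = g$. Setting $\eta_1 := \re(\db \tilde g)$ and $\eta^c := 0$, equations (\ref{eq:43})--(\ref{eq:44}) are trivial and (\ref{eq:45}) reduces to $\omega_1 \land d\eta_1 = \re(\omega_1 \land \partial\db \tilde g) = 0$, since on a K\"ahler $4$-manifold $\omega_1 \land \partial\db \tilde g$ is proportional to $(\Delta \tilde g)\vol^{g_0}$. The explicit identification of $\xi$ then uses the fibrewise $U(1)$-symmetry of the model (\ref{eq:58}): Fourier-decomposing $g = \sum_{n \leq 0} g_n$, the Laplacian preserves the ansatz $\tilde g_n = a_n(r) g_n$ and reduces $\Delta \tilde g_n = 0$, $a_n(1) = 1$, to a radial ODE of hypergeometric type; substituting the solution into $\db \tilde g_n$ produces (\ref{eq:60}), and the condition $\Lambda \partial\db \tilde g_n = 0$ forces $\varphi_n(r) = \Phi_n r^{-n}/\sqrt{1-r^2}$. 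A direct computation of $\iota_{\xi_n}\omega_1$ using (\ref{eq:58}) and the formula (\ref{eq:62}) for $\xi_n$ matches the same expression, proving (\ref{eq:61}).

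Once (\ref{eq:61}) holds, the infinitesimal gauge action of $\xi$ — which lies in $\ker d_{g_0}P$ by diffeomorphism invariance of the hyperk\"ahler condition — transfers $(\eta^c = 0, \eta_1 = \re \db \tilde g)$ into the equivalent tangent vector $(\iota_\xi \omega^c, 0)$. The verification that this representative belongs to $\cT^{s+1,\ell}_\delta$ proceeds along the same pattern as in the proof of lemma \ref{lem:1-formes}: one expands $\iota_\xi \omega^c$ near $X$ in the basis $(e^i)$ and uses the CR-holomorphy $\db_H g = 0$ together with the Dirichlet vanishing $(\tilde g - g)|_X = 0$ to cancel the coefficient on $e^1 = x^{-1}\theta^1$ and produce the differential vanishings required by lemma \ref{lem:condition-1-formes}. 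The regularity $\iota_\xi \omega^c \in \cH^{s+1,\ell}_{2,\delta}$ follows from Folland--Stein control on $g$ and the smoothness of each $\xi_n$ near $X$ (the singularity at the zero section is irrelevant near the boundary of the disc bundle).

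For the parametrization statement: when $n = 0$, $g_0$ is constant so $\tilde g_0$ is constant and $\xi_0 = 0$; when $n = -1$, the combination $r^{-n}g_n/\bar w$ is constant on each disc, so $\xi_{-1}$ is a fibrewise holomorphic translation on $T^*\Sigma$ preserving $\omega^c = dw \land dz$, whence $d\iota_{\xi_{-1}}\omega^c = \cL_{\xi_{-1}}\omega^c = 0$ and the deformation is trivial in $T\cM^{s,\ell}_\delta$; for $n \leq -2$ the distinct fibrewise $\bar w^{-n-1}$ homogeneities produce linearly independent $d\eta^c$, giving a faithful parametrization by $\oplus_{n \leq -2} H^0(\Sigma, K^{-n})$. \textbf{The main obstacle} is the identification (\ref{eq:61}) — solving the radial ODE and pinning down the constants $\Phi_n$ — and the ensuing boundary-condition check for $\iota_\xi \omega^c$, which couples the Folland--Stein regularity of the CR-holomorphic datum $g$ to the explicit fibrewise form of $\xi$; both are direct but delicate model computations.
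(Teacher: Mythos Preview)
Your proposal is correct and follows essentially the same route as the paper: the argument preceding the proposition in Section~\ref{sec:lespace-des-modules-1} constructs $(\eta^c=0,\eta_1=\re\db\tilde g)$ via the Dirichlet problem (\ref{eq:59}), derives (\ref{eq:61}) from the radial reduction and the equation $\Lambda\partial\db\tilde g_n=0$, and then applies the infinitesimal diffeomorphism $\xi$ to pass to $(\iota_\xi\omega^c,0)$ precisely in order to make the boundary conditions transparent---exactly your three-stage plan. Your treatment of the frequencies $n=0$ and $n=-1$ also matches the paper's (constant extension, and $\xi_{-1}$ preserving $\omega^c$ since $r^{-n}g_n/\bar w\sim\bar w^{-n-1}$ is constant on each disc).
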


Ces d\'eformations infinit\'esimales ont \'et\'e trouv\'ees par Hitchin
\cite[\S9]{Hit14}. Notre approche dans cette section montre qu'on
obtient ainsi \emph{toutes} les d\'eformations infinit\'esimales qui
restent sur la vari\'et\'e holomorphe symplectique $T^*\Sigma$. Gr\^ace \`a la
proposition ci-dessus, elles donnent des vecteurs tangents dans
$\cM_\delta^{s,\ell}$, et donc sont tangentes \`a des d\'eformations par de vraies
m\'etriques hyperk\"ahl\'eriennes pli\'ees. Les sections suivantes ont pour objet de montrer que ces m\'etriques hyperk\"ahl\'eriennes peuvent \^etre prises de sorte que leur
structure holomorphe symplectique soit bien celle d'un domaine de $T^*\Sigma$.

\begin{rema}\label{rem:pol-inv}
  On peut v\'erifier que les polyn\^omes invariants d\'efinis par Hitchin \cite[\S8.2]{Hit14}, \`a savoir
  \begin{equation}
 p_n = \big( \int_{M_0/\Sigma} w^n \omega_1 \big) dz^n \in H^0(\Sigma,K^n),\label{eq:105}
\end{equation}
\'evalu\'es sur ces d\'eformations infinit\'esimales, redonnent bien le param\`etre $g\in \oplus_{n\geq 2}H^0(\Sigma,K^n)$. En effet, faisons le calcul en consid\'erant que $g$ param\`etre la d\'eformation infinit\'esimale $(\eta^c=0,\eta_1=-\imath_\xi\omega_1)$, que nous choisissons ainsi puisque la structure de cotangent de $T^*\Sigma$ est pr\'eserv\'ee. Dans le calcul de la variation infinit\'esimale de $p_m$, il faut faire attention \`a prendre en compte la variation du domaine dans $T^*\Sigma$, qui se traduit par une contribution $\int_{X/\Sigma}w^n\imath_\xi\omega_1$ dans l'int\'egrale. Ainsi, la variation infinit\'esimale de $p_m$ est
\begin{align*}
  \dot p_n &= \big( \int_{M_0/\Sigma} w^nd\eta_1 - \int_{X/\Sigma} w^n \eta_1 \big) dz^n \\
           &= \big( \int_{M_0/\Sigma} -w^n \frac{dw}w \land \eta_1 \big) dz^n \\
           &= \big( \int_{M_0/\Sigma} -w^n \frac{dw}w \land \db\tilde g \big) dz^n \\
           &= \big( \int_{X/\Sigma} w^n g \frac{dw}w \big) dz^n \\
           &= g_n.
\end{align*}
\end{rema}

\section{Param\'etrisation des domaines dans le cotangent}
\label{sec:param-des-doma}

Nous proposons ici une param\'etrisation des d\'eformations du domaine
$M_s\subset T^*\Sigma$. Le r\'esultat essentiel  est le th\'eor\`eme
\ref{th:domaines} qui permet une param\'etrisation sans perte de
d\'eriv\'ees. Pour l'\'etude g\'en\'erale de toutes les d\'eformations d'un tel
domaine pseudoconcave, on pourra consulter \cite{EpsHen00} .

Une approche plus simple consisterait \`a param\'etrer ces d\'eformations
par les plongements $\varphi:X\to N$ qui induisent la m\^eme structure de contact
sur $X$. Si l'image est restreinte \`a $X$, on param\`etre ainsi les
contactomorphismes par une fonction r\'eelle (voir \cite{BlaDuc14}, ou
\cite[\S5]{Biq02} pour le cas $S^1$ invariant) : cette param\'etrisation
peut se faire sans perte de d\'eriv\'ees, et est bas\'ee sur les propri\'et\'es
du complexe de Rumin. Comme expliqu\'e \`a la fin de la section
\ref{sec:les-deform-infin}, le cas g\'en\'eral correspond \`a la
complexification du groupe des contactomorphismes, le complexe de
Rumin est \`a remplacer par le complexe du $\db_H$ sur la vari\'et\'e CR $X$,
mais les mauvaises propri\'et\'es analytiques de $\db_H$ semblent emp\^echer
une construction similaire. Nous adoptons donc dans cette section une
approche compl\`etement diff\'erente.

Une d\'eformation $J$ de la structure complexe $J_0$ de $M_s$ est
param\'etr\'ee par un tenseur $\phi\in \Omega^{0,1}\otimes T^{1,0}$ (les bi-degr\'es sont
pour $J_0$) tel que l'espace des vecteurs de type $(0,1)$ pour $J$
soit le graphe de $\phi:T^{0,1}\to T^{1,0}$,
$$  T^{0,1}_J = \{ \xi + \phi_\xi, \xi\in T^{0,1}_{J_0} \}. $$
A priori, un tel $\phi$ ne d\'efinit qu'une structure presque complexe $J$
; elle est int\'egrable si
\begin{equation}
 \db \phi + \frac12 [\phi,\phi] = 0,\label{eq:64}
\end{equation}
o\`u $[\phi,\phi]$ fait intervenir le produit ext\'erieur des formes et
le crochet des champs de vecteurs.

La structure complexe $J$ induit sur le bord $X=\partial M_s$ une structure
CR. Quitte \`a agir par un diff\'eomorphisme, on peut supposer que la
structure CR garde la m\^eme distribution de contact sous-jacente
$H$. En outre, comme $M_s$ est un fibr\'e holomorphe en disques, nous
avons une d\'ecomposition globale, $S^1$-invariante,
$$ TM = H \oplus V, $$
o\`u $V=\ker p_*$ et $H$ est l'horizontal fourni par la structure de
contact.

Par \cite[Th\'eor\`eme 4.1]{Biq02}, toute petite d\'eformation de $J$ pour
laquelle $\Sigma\subset M_s$ demeure une sous-vari\'et\'e holomorphe, apr\`es action
d'un diff\'eomorphisme unique modulo $S^1$, s'\'ecrit dans cette
d\'ecomposition sous la forme
\begin{equation}
 \phi=
\begin{pmatrix}
  \psi&0\\0&0
\end{pmatrix},\label{eq:65}
\end{equation}
o\`u $\psi\in \Omega^{0,1}H\otimes H^{1,0}=p^*(\Omega^{0,1}_\Sigma\otimes T^{1,0}_\Sigma)$ est holomorphe le
long de chaque disque de la fibration $p$ (cela a un sens puisque le
fibr\'e $\Omega^{0,1}_\Sigma\otimes T^{1,0}_\Sigma$ est trivial le long de chaque disque) :
donc, d\'ecomposant en s\'eries de Fourier pour l'action de $S^1$,
\begin{equation}
  \label{eq:66}
  \psi=\sum_{n\geq 0} \psi_n, \text{ o\`u } \psi_n|_{p^{-1}(x)} = F_n(x) w^n,
\end{equation}
o\`u $w$ est un choix de coordonn\'ee holomorphe sur le disque
$p^{-1}(x)$. Plus intrins\`equement, on peut voir $w$ comme un point de
l'espace total du fibr\'e $K$, et $F_n$ comme une section sur $\Sigma$
de $K^{-n}\otimes\Omega^{0,1}_\Sigma\otimes T^{1,0}_\Sigma=K^{-n-1}\otimes\Omega^{0,1}_\Sigma$. 

R\'eciproquement, la donn\'ee d'un tel $\psi$, holomorphe le long des
disques de la fibration, induit une d\'eformation complexe $\phi$ donn\'ee
par (\ref{eq:65}), satisfaisant $[\phi,\phi]=0$ et donc l'\'equation
d'int\'egrabilit\'e (\ref{eq:64}).

Parmi ces d\'eformations holomorphes, cherchons celles qui
demeurent un domaine dans un cotangent holomorphe $T^*\Sigma$. Nous demandons ainsi l'existence d'une projection $\pi:M_s\to\Sigma$ et d'une 2-forme complexe $\Omega$, telles que
\begin{equation}\label{eq:67}
  \begin{split}
    \Omega &\in \Omega^{2,0}_J ; \\
    d\Omega &= 0 ; \\
    \db_J\pi &=0.
  \end{split}
\end{equation}
On obtient alors imm\'ediatement que $\Omega=d\Theta$ avec $\Theta$ une 1-forme s'annulant sur les fibres de $\pi$ et sur $\Sigma$ : la forme $\Theta$ identifie alors $M_s$ avec $T^*\Sigma$, dont elle appara\^\i t comme la forme de Liouville.

Analysons tout d'abord les conditions sur $\Omega$.
La premi\`ere condition permet d'\'ecrire, pour une (1,0)-forme horizontale $\alpha$,
\begin{align}
  \label{eq:68}
  \Omega &= (\alpha - \psi \lrcorner \alpha)\land \eta^{1,0}, \\
\intertext{d'o\`u}
 d\Omega &= d(\alpha - \psi \lrcorner \alpha) \land \eta^{1,0} .\label{eq:69}
\end{align}
La deuxi\`eme condition, \'equivalente \`a $\db_J\Omega=0$, se traduit par
\begin{equation}\label{eq:70}
  \begin{split}
    \imath_{\bar w \partial_{\bar w}}d\Omega&=0, \\
    \imath_{\xi+\psi_\xi}d\Omega&=0 \text{ pour }\xi\in H^{1,0}.
  \end{split}
\end{equation}
La premi\`ere \'equation dans (\ref{eq:70}) m\`ene \`a
\begin{equation}
  \label{eq:71}
  \partial_{\bar w}(\alpha-\psi\lrcorner \alpha)=0,
\end{equation}
c'est-\`a-dire $\alpha$ et $\psi\lrcorner \alpha$ sont holomorphes le long des disques de la
fibration, ce qui, puisque $\psi$ est d\'ej\`a holomorphe le long des disques, est \'equivalent \`a $\alpha$ holomorphe le long des disques ; on a
\begin{equation}
  \label{eq:72}
  \imath_{\xi+\psi_\xi}d\Omega = \big( \imath_{\xi+\psi_\xi}d(\alpha-\psi\lrcorner \alpha) \big) \land \eta^{1,0} ;
\end{equation}
comme $\imath_{\xi+\psi_\xi}d\Omega\in \Omega^{2,0}_J$, son annulation est \'equivalente \`a celle
de sa projection sur $\Omega^{2,0}_{J_0}$, donc, notant $d_H$, $\db_H$ et $\partial_H$ les
restrictions \`a $H$ de $d$, $\db$ et $\partial$, la seconde \'equation de (\ref{eq:70}) est \'equivalente \`a
\begin{equation}\label{eq:73} 
    \db_H \alpha - \partial_H(\psi\lrcorner \alpha) = 0.
\end{equation}

Nous pouvons r\'esumer ces observations dans le lemme suivant.
\begin{lemm}
  Les 2-formes sur $M_s$ satisfaisant les deux premi\`eres \'equations de (\ref{eq:67}) sont en correspondance avec les (1,0)-formes horizontales $\alpha$ sur $X$, telles que
  \begin{enumerate}
  \item $\alpha$ n'a des coefficients non nuls que pour des fr\'equences strictement positives ;
  \item $\alpha$ satisfait l'\'equation (\ref{eq:73}) sur $X$, qui n'est
    autre que l'\'equation $\db_{H,J}(\alpha-\psi \lrcorner \alpha)=0$.
  \end{enumerate}
\end{lemm}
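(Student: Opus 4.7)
Mon plan est de prouver la bijection dans les deux sens en s'appuyant directement sur les calculs effectu\'es juste avant l'\'enonc\'e. Dans le sens direct, partant d'une 2-forme $\Omega$ satisfaisant les deux premi\`eres \'equations de (\ref{eq:67}), la condition d'\^etre de type $(2,0)$ pour $J$ permet de l'\'ecrire sous la forme (\ref{eq:68}), ce qui d\'efinit l'unique (1,0)-forme horizontale $\alpha$ associ\'ee. La fermeture $d\Omega=0$ \'equivaut alors \`a $\db_J\Omega=0$ (aucune composante $(3,0)$ en dimension complexe 2), et se d\'ecompose suivant les directions verticale et horizontale comme en (\ref{eq:70}), ce qui donne pr\'ecis\'ement (\ref{eq:71}) et (\ref{eq:73}). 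Il suffit alors de restreindre $\alpha$ au bord $X$ pour obtenir le candidat.

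Dans le sens r\'eciproque, \'etant donn\'ee une (1,0)-forme horizontale $\alpha$ sur $X$ v\'erifiant (i) et (ii), je proc\`ede par extension holomorphe : la d\'ecomposition de Fourier $\alpha=\sum_{n\geq 1}\alpha_n$ s'\'etend le long de chaque disque de la fibration $p$ en posant $\alpha_n\mapsto \alpha_n w^n$, ce qui fournit une (1,0)-forme horizontale sur $M_s$ holomorphe le long des fibres. Je d\'efinis ensuite $\Omega$ sur $M_s$ par la formule (\ref{eq:68}) ; par construction $\Omega$ est de type $(2,0)$ et l'\'equation (\ref{eq:71}) est automatiquement satisfaite. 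La condition (\ref{eq:73}), initialement pos\'ee sur $X$, se propage \`a $M_s$ gr\^ace \`a l'holomorphie le long des fibres, et combin\'ee avec (\ref{eq:71}) elle redonne la fermeture $d\Omega=0$ par les m\^emes calculs (\ref{eq:70})--(\ref{eq:73}) men\'es \`a l'envers.

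Le point d\'elicat est la justification de la stricte positivit\'e des fr\'equences. Pr\`es de la section nulle $\Sigma\subset M_s$, la forme $\eta^{1,0}$ pr\'esente une singularit\'e du type $\frac{dw}{2iw}$ modulo des termes horizontaux r\'eguliers ; pour que le produit $(\alpha-\psi\lrcorner \alpha)\land \eta^{1,0}$ s'\'etende en une $(2,0)$-forme lisse \`a travers $\Sigma$, les composantes horizontales de $\alpha$ doivent s'annuler en $O(|w|)$, ce qui exclut pr\'ecis\'ement la fr\'equence $0$ et impose $n\geq 1$. R\'eciproquement, pour $n\geq 1$, l'extension $\alpha_n w^n$ compense le p\^ole de $\eta^{1,0}$ et produit une 2-forme r\'eguli\`ere sur tout $M_s$. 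L'obstacle technique ici est donc de contr\^oler pr\'ecis\'ement le comportement de $\eta^{1,0}$ au voisinage de $\Sigma$, et de v\'erifier que la bijection ainsi obtenue respecte effectivement les classes de r\'egularit\'e souhait\'ees ; c'est une v\'erification locale dans les coordonn\'ees holomorphes fibr\'ees, mais elle doit \^etre men\'ee avec soin pour garantir que la construction produit bien une $(2,0)$-forme globale sur $M_s$.
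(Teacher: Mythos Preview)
Your proposal is correct and follows essentially the same route as the paper's proof: the key points --- that holomorphy of $\alpha$ along the disks forces nonnegative frequencies, that the pole $\eta^{1,0}\sim\frac{dw}{2iw}$ at the zero section excludes frequency $0$, and that (\ref{eq:73}) holds on $M_s$ iff it holds on $X$ because $\alpha$ and $\psi$ are both fibrewise holomorphic --- are exactly those used in the paper. You simply organise the bijection more explicitly into the two directions and add a little more detail; no substantive difference.
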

\begin{proof}
  Puisque $\alpha$ est holomorphe le long des disques, elle n'a de coefficients de Fourier non nuls qu'en fr\'equences positives. Sachant que $\eta^{1,0}$ s'identifie \`a $\frac{dw}{2iw}$ sur chaque disque, pour que $\Omega$ s'\'etende \`a la section nulle, il faut que les coefficients invariants par rotation s'annulent aussi, donc $\alpha$ n'a de coefficients non nuls qu'en fr\'equences strictement positives.
Dans ces conditions, puisque $\psi$ aussi est holomorphe le long des disques, le syst\`eme (\ref{eq:73}) est satisfait sur $M_s$ si et seulement s'il est satisfait sur le bord $X$.
\end{proof}

Revenons maintenant \`a l'\'equation sur $\pi$, un peu plus subtile \`a analyser. Commen\c cons par regarder les applications $\pi:M_s\to\Sigma$, d\'eformations de $p$, et holomorphes le long des disques. \'Etant donn\'e un point $\sigma\in \Sigma$, on peut choisir une coordonn\'ee locale $z$ autour de $\sigma$, et, au dessus d'un voisinage de $\sigma$, les projections $\pi$, holomorphes verticalement, s'identifient aux fonctions $z\circ \pi$ \`a valeurs dans $\setC$, holomorphes le long de chaque disque. Il appara\^\i t ainsi que l'espace des applications $\pi:M\to\Sigma$, holomorphes verticalement, de r\'egularit\'e $\FS^m$ sur $X$, est une vari\'et\'e banachique bien d\'efinie que nous noterons $\cP^m$, et dont l'espace tangent en $p$ s'identifie aux sections sur $X$ de $p^*T\Sigma$, de r\'egularit\'e $\FS^m$, dont les coefficients non nuls sont en fr\'equences positives---nous noterons cet espace $\FS^m_{\geq 0}(p^*T\Sigma)=\FS^m_{\geq 0}(H^{1,0})$.

Les solutions de la troisi\`eme \'equation de (\ref{eq:67}) s'identifient \`a pr\'esent aux applications $\pi\in \cP^m$, telles que sur le bord $X$ on ait pour tout $\xi\in H^{0,1}$,
\begin{equation}
  \label{eq:74}
  \pi_*(\xi+\psi_\xi)=0,
\end{equation}
ce qui, \`a nouveau, n'est autre que l'\'equation $\db_{H,J}\pi=0$ sur $X$.
En effet, en choisissant localement une coordonn\'ee holomorphe $z$ sur $\Sigma$, on voit qu'un \'el\'ement de $\cP^m$ satisfait l'\'equation (\ref{eq:74}) si et seulement s'il la satisfait sur $X$.

Il y a des solutions \'evidentes au syst\`eme (\ref{eq:67}), provenant de
la d\'eformation de l'hypersurface $X$ dans $T^*\Sigma$. Bien entendu, une
telle d\'eformation doit \^etre suivie d'un diff\'eomorphisme qui ram\`ene le
domaine d\'elimit\'e \`a la jauge particuli\`ere satisfaisant (\ref{eq:65}).

Explicitons ces solutions. Un contactomorphisme infinit\'esimal sur $X$
est de la forme $\xi=gR-\sharp d_Hg$, o\`u $g$ est une fonction r\'eelle sur $X$,
et $\sharp:\Omega^1H\to H$ est d\'efini par $\imath_{\sharp\alpha}d\eta=\alpha$. Il agit sur l'espace des
structures complexes infinit\'esimales sur $X$ par
\begin{equation}
\dot \psi=\db_H\sharp\db_Hg.\label{eq:75}
\end{equation}
Cette action se complexifie en d\'ecidant que la fonction $g$ peut \^etre
\`a valeurs complexes : l'action infinit\'esimale est alors celle de la
partie r\'eelle du vecteur de type $(1,0)$ donn\'e par
\begin{equation}
\xi^{1,0}=2(gR^{1,0}-\sharp\db_Hg)=2(igw\partial_w-\sharp\db_Hg),\label{eq:76}
\end{equation}
et l'action sur les structures CR reste donn\'ee par la formule
(\ref{eq:75}).

On a d\'ej\`a vu que ces champs de vecteurs le long de $X$
sont la version infinit\'esimale des plongements $\varphi:X\to T^*\Sigma$
telles que la structure CR induite par $\varphi$ sur $X$ garde $H$ comme
structure de contact sous-jacente, qui sont un analogue de la complexification du groupe des
contactomorphismes ; l'action (\ref{eq:75}) est exactement la complexification de l'action infinit\'esimale des contactomorphismes.

Comme il se doit, l'action des contactomorphismes ne pr\'eserve pas la
jauge (les coefficients de Fourier positifs). En revanche, l'action
complexifi\'ee infinit\'esimale des fonctions $g$ \`a fr\'equences positives
pr\'eserve cette jauge, et la proposition suivante montre qu'on obtient
ainsi toutes les d\'eformations. Notant $\FS^m_{\geq 0}$ (resp. $\FS^m_{>0}$) les espaces de sections dont les coefficients non nuls se trouvent uniquement en fr\'equences positives (resp. strictement positives) :
\begin{prop}\label{prop:param-domaines}
  Soit $m \gg 0$. L'espace des $$(\psi,\alpha,\pi)\in \FS^m_{\geq 0}\times \FS^m_{> 0}\times
  \cP^{m+1}$$ satisfaisant (\ref{eq:73}) et (\ref{eq:74}) est une vari\'et\'e
  d'espace tangent param\'etr\'e par l'action infinit\'esimale des vecteurs
  donn\'es par la formule (\ref{eq:76}), o\`u $g\in \FS^{m+2}_{\geq 0}$. Les formules sont
  $$ \dot \psi = \db_H \sharp \db_H g, \quad
     \dot \alpha = iR\cdot (g\Theta^0) - \partial_H\Lambda_\Sigma(\db_Hg\land \Theta^0), \quad
     \dot \pi = - \sharp \db_H g. $$
\end{prop}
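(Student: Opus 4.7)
La strat\'egie est d'appliquer le th\'eor\`eme des fonctions implicites, en montrant que la diff\'erentielle au point standard $(\psi=0,\alpha=\Theta^0,\pi=p)$ de l'application d\'efinissant les contraintes admet comme param\'etrisation de son noyau exactement l'application $g\mapsto (\dot\psi,\dot\alpha,\dot\pi)$ de l'\'enonc\'e. La premi\`ere \'etape consiste \`a v\'erifier ces formules par un calcul direct de d\'eriv\'ees de Lie le long du vecteur $\xi^{1,0}$ de (\ref{eq:76}) : la formule $\dot\psi=\db_H\sharp\db_Hg$ est la formule (\ref{eq:75}) complexifi\'ee ; la formule $\dot\pi=-\sharp\db_Hg$ vient de ce que seule la partie horizontale $-2\sharp\db_Hg$ de $\xi^{1,0}$ d\'eplace les fibres, la partie verticale $2igw\partial_w$ restant tangente aux fibres de $p$ ; la formule pour $\dot\alpha$ s'obtient en \'evaluant $\cL_{\xi^{1,0}}\Theta^0$ via la formule de Cartan et en projetant sur $\Omega^{1,0}_H$, le premier terme $iR\cdot(g\Theta^0)$ provenant de la contribution verticale et le second $-\partial_H\Lambda_\Sigma(\db_Hg\land \Theta^0)$ de la contribution horizontale.

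La deuxi\`eme \'etape est de v\'erifier que ces formules satisfont le syst\`eme lin\'earis\'e de (\ref{eq:73}) et (\ref{eq:74}). C'est structurellement automatique : $\xi^{1,0}$ provient d'un plongement infinit\'esimal $X\hookrightarrow T^*\Sigma$, et les $(\Omega,\pi)$ d\'eform\'es sont simplement restreints de la structure holomorphe symplectique standard de $T^*\Sigma$, donc satisfont (\ref{eq:67}). Pour l'injectivit\'e de la param\'etrisation, $\dot\pi=-\sharp\db_Hg=0$ entra\^\i ne $\db_Hg=0$, donc $g$ est CR-holomorphe ; or les fonctions CR-holomorphes sur $X$ ont leurs coefficients de Fourier en fr\'equences strictement n\'egatives ou nulles (section \ref{sec:lespace-des-modules-1}), donc l'intersection avec $\FS^{m+2}_{\geq 0}$ se r\'eduit au mode constant, qui est absorb\'e par la normalisation usuelle ou par l'action $S^1$.

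L'obstacle principal sera la surjectivit\'e avec contr\^ole exact de la r\'egularit\'e -- c'est pr\'ecis\'ement le point qui motive cette section, comme expliqu\'e au d\'ebut. Je proc\`ederais mode de Fourier par mode de Fourier : \`a fr\'equence $n>0$, l'espace des variations $(\dot\psi_n,\dot\alpha_n,\dot\pi_n)$ satisfaisant les contraintes lin\'earis\'ees de (\ref{eq:73})--(\ref{eq:74}) et la jauge (\ref{eq:65}) s'identifie \`a un espace de sections sur $\Sigma$, et je v\'erifierais que l'op\'erateur $g_n\mapsto (\dot\psi_n,\dot\alpha_n,\dot\pi_n)$ r\'ealise une bijection de $\FS^{m+2}_n$ sur cet espace, gagnant m\^eme une d\'eriv\'ee gr\^ace \`a l'op\'erateur hypoelliptique $\sharp\db_H$, d'o\`u le d\'ecalage $\FS^{m+2}\to (\FS^m,\FS^m,\FS^{m+1})$ de l'\'enonc\'e. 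Les bonnes propri\'et\'es du complexe de Rumin \`a fr\'equences positives (voir \cite{Biq02}, \S5) rendent ce contr\^ole de r\'egularit\'e possible ; la perte de d\'eriv\'ees qui appara\^\i trait naturellement si on param\'etrait par des fonctions CR quelconques est \'evit\'ee par la restriction aux fr\'equences positives. Le th\'eor\`eme des fonctions implicites dans les espaces $\FS^m$ fournit alors la structure de vari\'et\'e, l'inversibilit\'e sans perte de d\'eriv\'ees de la diff\'erentielle garantissant que la param\'etrisation non lin\'eaire est bien une carte.
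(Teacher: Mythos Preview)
Your proposal has a structural gap: to apply the implicit function theorem you need the linearization
\[
L(\dot\psi,\dot\alpha,\dot\pi)=\bigl(\db_H\dot\alpha-\partial_H(\dot\psi\lrcorner\Theta^0),\;\db_H\dot\pi+\dot\psi\bigr)
\]
of the constraint map to be \emph{surjective onto its target}, with a bounded right inverse. You never address this. Your third paragraph discusses instead the surjectivity of the parametrization $g\mapsto(\dot\psi,\dot\alpha,\dot\pi)$ onto $\ker L$, which is a separate and logically subsequent question. The paper treats the surjectivity of $L$ explicitly: the cokernel of $\partial_H:\Omega^{0,1}H\to\Omega^{1,1}H$ at frequency $n$ is $H^0(K^{-n})^*$, which vanishes for $n>0$, so the first component of $L$ is onto; on the second component, the cokernel of $\dot\pi\mapsto\db_H\dot\pi$ at frequency $n$ is $H^0(K^{n+1})^*$, and the paper shows it is exactly compensated by the kernel of $\dot\psi\mapsto\partial_H(\dot\psi\lrcorner\Theta^0)$ coming from the first component. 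This cohomological cancellation is the ingredient you are missing.

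Your identification of $\ker L$ is also incomplete. Showing that the formulas lie in the kernel (via Lie derivatives, or because they come from genuine deformations of $X\subset T^*\Sigma$) is the easy half; the hard half is that \emph{every} $(\dot\psi,\dot\alpha,\dot\pi)\in\ker L$ arises from some $g$. Your mode-by-mode sketch does not explain this. The paper's argument is concrete: from $\dot\psi=-\db_H\dot\pi$, set $f=\dot\pi\lrcorner\Theta^0$ and use $\partial_H\db_H+\db_H\partial_H=(R\cdot)\,\omega_\Sigma$ to obtain $\db_H(\dot\alpha-\partial_Hf)=-R\cdot f\,\omega_\Sigma$; then the closed range of $\db_H$ and the fact that $R$ preserves the splitting $\im\db_H\oplus\ker\db_H^*$ force $f\omega_\Sigma\in\im\db_H$, producing $g\in\FS^{m+2}_{>0}$ with $f\omega_\Sigma=-\db_H(g\Theta^0)$, hence $\dot\pi=-\sharp\db_Hg$ and the remaining formulas. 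The regularity $g\in\FS^{m+2}$ comes from inverting $\db_H$ on data in $\FS^{m+1}$ at this precise step, not from a generic hypoellipticity claim about $\sharp\db_H$.
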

Ici $\Theta^0$ d\'esigne la forme de Liouville initiale de $T^*\Sigma$.
On remarquera aussi que la fonction constante $g\in \setR$ correspond \`a
multiplier $\Theta^0$ par une constante imaginaire pure, ce qui ne change pas le domaine holomorphe symplectique. Cela correspond \`a l'ambigu\"\i t\'e de jauge d\^ue \`a l'action de $S^1$, donc on peut imposer $\re \int_Xg=0$ quand on param\`etre les d\'eformations holomorphes symplectiques de $M_s$.
\begin{proof}
  Nous consid\'erons donc l'op\'erateur
  \begin{multline}
Q:\FS^m_{\geq 0}(\Omega^{0,1}H\otimes\Omega^{1,0}H)\times \FS^m_{> 0}(\Omega^{1,0}H)\times
  \cP^{m+1}\\\longrightarrow\FS^{m-1}_{> 0}(\Omega^{0,1}H\otimes\Omega^{1,0}H)\times \FS^m_{\geq 0}(\Omega^{0,1}\otimes H^{1,0}),\label{eq:77}
\end{multline}
  d\'efini par
  \begin{equation}
    \label{eq:78}
    Q(\psi,\alpha,\pi) = (\db_{H,J}(\alpha-\psi\lrcorner \alpha),\db_{H,J}\pi),
  \end{equation}
de lin\'earisation
$$ L(\dot \psi,\dot \alpha,\dot \pi) =
(\db_H \dot \alpha- \partial_H(\dot \psi\lrcorner \Theta^0),\db_H\dot \pi + \dot \psi). $$
La d\'emonstration consiste maintenant \`a montrer que $L$ est surjective et \`a identifier $\ker L$.

L'op\'erateur $\db_H$ sur $X$ a une image ferm\'ee, mais de codimension infinie. Tout est explicite dans la d\'ecomposition en s\'eries de Fourier : sur un fibr\'e holomorphe $L$ provenant de $\Sigma$, le conoyau en fr\'equence $n$ est $H^1(K^{-n}\otimes L)=H^0(K^{n+1}\otimes L^*)^*$. Pour l'op\'erateur $\partial_H$, de mani\`ere similaire, l'image est ferm\'ee mais le conoyau en fr\'equence $n$ est $H^0(K^{-n-1}\otimes L)^*$.

En particulier, passant \`a la conjugaison, nous voyons que le conoyau de $\partial_H:\Omega^{0,1}H\to\Omega^{1,1}H$ est $H^0(K^{-n})^*$, et en particulier s'annule en fr\'equences $n>0$, ce qui donne la surjectivit\'e sur le premier facteur. En revanche, le noyau de $\partial_H$ en fr\'equence $n$ s'identifie \`a $H^0(K^{n+1})^*$.

Pour le second facteur, observons que le conoyau de $\dot \pi\mapsto\db_H\dot \pi$ s'identifie \`a $H^0(K^{n+1})^*$ en fr\'equence $n$, exactement compens\'e, comme on vient de le voir, par le noyau de $\dot \psi\mapsto\partial_H(\dot \psi\lrcorner \Theta^0)$. 

Par cons\'equent, $L$ est surjective, et l'espace des solutions du syst\`eme (\ref{eq:73}) est
une sous-vari\'et\'e, d'espace tangent \'egal \`a $\ker L$, que nous
d\'eterminons \`a pr\'esent. Soit donc $(\dot \psi,\dot \alpha,\dot \pi)\in \ker L$,
il faut donc que $\dot \psi=-\db_H\dot \pi$, puis
\begin{equation}
  \label{eq:79}
  \db_H \dot \alpha = - \partial_H \db_H f, \quad f=\dot \pi\lrcorner \Theta^0. 
\end{equation}
Utilisant la formule $\partial_H\db_Hf+\db_H\partial_Hf=d_H^2f=R\cdot f \omega_\Sigma$ pour toute fonction $f$, on
obtient
\begin{equation}
  \label{eq:80}
  \db_H(\alpha-\partial_Hf) = -R\cdot f \omega_\Sigma.
\end{equation}
Rappelons \`a nouveau que l'image de $\db_H$ est ferm\'ee, donc on peut
d\'ecomposer en somme orthogonale $\FS^{m+1}(\Omega^{1,1}H) = \im \db_H \oplus \ker \db_H^*$. Comme la d\'erivation par $R$ commute avec $\db_H$ et $\db_H^*$, elle envoie cette d\'ecomposition de $\FS^{m+1}$ sur la m\^eme d\'ecomposition de $\FS^{m-1}$. Par cons\'equent, l'\'egalit\'e (\ref{eq:80}) impose $f \omega_\Sigma\in \im\db_H$, donc il existe $g\in \FS^{m+2}_{> 0}$, unique \`a constante additive pr\`es, telle que
$ f \omega_\Sigma = -\db_H (g \Theta^0)$, ce qui s'\'ecrit encore, si $\Theta_0\in H^{1,0}$ est dual \`a $\Theta^0$,
$$ f\Theta_0 = - \sharp \db_H g. $$
\`A partir de l\`a, on r\'ecup\`ere $\dot \pi=-\sharp \db_Hg$, puis $\dot \psi=\db_H\sharp \db_Hg$, enfin, \`a partir de (\ref{eq:80}), on calcule
$$ \dot \alpha = iR\cdot (g\Theta^0) - \partial_H\Lambda_\Sigma(\db_Hg\land \Theta^0) ; $$
on peut v\'erifier que $(\dot \alpha-\dot \psi\lrcorner \Theta^0)\land \eta^{1,0}$ n'est
autre que l'action infinit\'esimale de $\xi$ sur la forme symplectique initiale $\Omega^0=d\Theta^0$ de $T^*\Sigma$.
\end{proof}

Nous synth\'etisons les r\'esultats de cette section de la mani\`ere suivante. Notons $\cQ^{s,\ell}_{\delta, \setC}$ l'espace des 2-formes complexes $\omega^c=\omega_2+i\omega_3$ telles que $(\omega_1=0,\omega_2,\omega_3)\in \cQ^{s,\ell}_\delta$, et $\cM^{s,\ell}_{\delta, \setC}\subset \cQ^{s,\ell}_{\delta, \setC}$ l'espace des 2-formes $\omega^c$ telles que $(\omega^c)^2=0$. Enfin, notons $\FS^m_{\geq 0}(X)_0$ l'espace des fonctions $f\in \FS^m_{\geq 0}$ telles que $\re \int_Xg=0$.
\begin{theo}\label{th:domaines}
  On pose $\delta=\frac12$ et $s=\ell+\frac12$.
  Il existe une sous-vari\'et\'e $\cC^{s,\ell}_\delta\subset \cM^{s,\ell}_{\delta, \setC}$ d\'efinie pr\`es du mod\`ele $M_s$, qui contient toutes les petites d\'eformations de $M_s$ provenant d'une variation de $X$ dans $T^*\Sigma$, et est transverse \`a l'action des diff\'eomorphismes.

En outre, il existe une param\'etrisation $\mho:\FS^{\ell+2+\delta}_{\geq
  0}(X)_0\to\cC^{s,\ell}_{\delta, \setC}$, d\'efinie sur un voisinage de $0$, telle
que $d\mho(g)$ est la d\'eformation infinit\'esimale de la forme initiale $\Omega^0$ par l'action
du champ de vecteurs
$$ \xi=2\re(igw\partial_w-\sharp \db_Hg),$$
prolong\'e holomorphiquement disque \`a disque le long des fibres de la projection $p:M_s\to\Sigma$.
\end{theo}
\begin{proof}
  \`A partir de la proposition \ref{prop:param-domaines}, nous obtenons
  des d\'eformations de $\omega^c$ param\'etr\'ees par une fonction $g\in
  \FS^{\ell+2+\delta}_{\geq 0}(X)_0$. Malheureusement l'extension holomorphe
  disque \`a disque donne une r\'egularit\'e \`a l'int\'erieur qui est la m\^eme
  que celle au bord, donc $\omega^c\in \cC^{\ell+\delta,\ell}_{\delta, \setC}$ seulement. 
\end{proof}

\begin{rema}\label{rema:s}
  Le th\'eor\`eme est d\'emontr\'e pour la valeur sp\'ecifique de $s$ \'ecrite,
  mais en r\'ealit\'e, on peut r\'egulariser les solutions obtenues par un
  diff\'eomorphisme pour obtenir une param\'etrisation dans $\cM^{s,\ell}_{\delta,
    \setC}$ pour $s\gg \ell$. (Cela correspond \`a l'id\'ee qu'une forme holomorphe
  symplectique d\'etermine des coordonn\'ees holomorphes dans lesquelles
  elle s'exprime avec des coefficients $C^\infty$). Cette r\'egularisation se
  fait en r\'esolvant un probl\`eme de jauge sur le diff\'eomorphisme, ce
  qui est possible en \'etendant l'analyse de la section
  \ref{sec:analyse} du cas des fonctions au cas des champs de vecteurs
  ; pour \'eviter d'allonger inutilement l'article, on a donc pr\'ef\'er\'e se
  limiter \`a cet \'enonc\'e. Pr\'ecisons \`a nouveau qu'\`a la fin, on sait bien
  par la construction twistorielle que la m\'etrique hyperk\"ahl\'erienne
  est $C^\infty$ dans l'int\'erieur de $M_0$.
\end{rema}

\begin{rema}
  On s'attendrait \`a ce que la r\'egularit\'e des bords des domaines construits soit $\FS^{\ell+1+\delta}$, c'est-\`a-dire la r\'egularit\'e de $\xi$. Mais la forme de Liouville du cotangent, $\Theta$, est r\'ecup\'er\'ee en int\'egrant la forme holomorphe symplectique $\Omega$ le long des fibres de la projection $\pi$, ce qui ne permet pas de gain de r\'egularit\'e, et il en r\'esulte qu'a priori la r\'egularit\'e des bords est seulement $\FS^{\ell+\delta}$. Cela illustre le probl\`eme de perte de d\'eriv\'ees que notre m\'ethode a permis de contourner.
\end{rema}

\section{La composante de Hitchin pour $SL(\infty,\setR)$}
\label{sec:la-composante-de}

Dans cette section, on se limite \`a nouveau \`a $\delta=\frac12$ et
$s=\ell+\frac12$. La notation g\'en\'erale est laiss\'ee car les \'enonc\'es sont
valables en r\'ealit\'e pour $s$ plus grand, voir remarque \ref{rema:s}, mais
nous avons limit\'e la d\'emonstration \`a ce cas particulier.

Le th\'eor\`eme \ref{theo:comp-H} montre l'existence d'une param\'etrisation $$\mho:\FS^{\ell+2+\delta}\to\cC^{s,\ell}_\delta\subset \cM^{s,\ell}_{\delta, \setC},$$ telle que
\begin{equation}
 d\mho(g) = \cL_\xi\Omega^0, \quad \xi = 2\big(igw\partial_w-\sharp\db_Hg\big),\label{eq:81}
\end{equation}
o\`u $g$ a \'et\'e \'etendue holomorphiquement disque \`a disque.
En particulier, \'ecrivant $g=g_1+ig_2$, la modification infinit\'esimale du domaine $M_s\subset T^*\Sigma$
est donn\'ee par le d\'eplacement infinit\'esimal de $X\subset T^*\Sigma$ par le vecteur $\re \xi$ le long de $X$ :
\begin{equation}
 \re \xi|_X = \xi_1+\xi_2, \quad
 \begin{cases}
   \xi_1&=-g_1X_1+(X_3g_1)X_2-(X_2g_1)X_3,\\
   \xi_2&=g_2x^{-1}\partial_x+(X_2g_2)X_2+(X_3g_2)X_3 .
 \end{cases}\label{eq:82}
\end{equation}

\begin{defi}
  La composante de Hitchin pour $SL(\infty,\setR)$ est constitu\'ee des couples
  $(\omega^c,\omega_1)\in \cM^{s,\ell}_\delta$ tels que $\omega^c\in \cC^{s,\ell}_\delta$, c'est-\`a-dire des
  domaines de $T^*\Sigma$ qui portent des m\'etriques hyperk\"ahl\'eriennes pli\'ees.
\end{defi}
On remarquera que restreindre $\omega^c$ \`a $\cC^{s,\ell}_\delta$ tue ipso facto
l'ambigu\"\i t\'e de jauge d\^ue \`a l'action des diff\'eomorphismes.

D'autre part, la r\'egularit\'e finie n'est pr\'esente dans cette d\'efinition
que pour des raisons techniques, on ne s'int\'eresse en r\'ealit\'e qu'\`a des
objets lisses. Mais les solutions ne sont pas forc\'ement lisses
jusqu'au bord, d'o\`u la n\'ecessit\'e d'en fixer la r\'egularit\'e.

Rappelons que la d\'ecomposition en s\'eries de Fourier des fonctions CR
holomorphes sur $X$ est $\oplus_{n\geq 0}H^0(\Sigma,K^n)$ (une section de $K^n$
correspond \`a une fonction de fr\'equence $-n$).
\begin{theo}\label{theo:comp-H}
  Pr\`es de la structure standard, la composante de Hitchin pour
  $SL(\infty,\setR)$ est une sous-vari\'et\'e de $\cC^{s,\ell}_\delta$ dont l'espace tangent
  s'identifie \`a
  $$  \FS^{\ell+2+\delta}(X) \cap \oplus_{n\geq 2}H^0(\Sigma,K^n), $$
  c'est-\`a-dire aux fonctions CR holomorphes sur
  $X$ de r\'egularit\'e $\FS^{\ell+2+\delta}$, modulo $\setC \oplus H^0(\Sigma,K)$.

  En particulier, pr\`es de la structure standard, les \'el\'ements de la composante de Hitchin sont d\'etermin\'es par leurs polyn\^omes invariants d\'efinis par (\ref{eq:105}).
\end{theo}
\begin{rema}\label{rema:section-H}
  Au vu de la remarque \ref{rem:pol-inv}, quitte \`a appliquer un
  diff\'eomorphisme, on peut supposer que la param\'etrisation de la
  composante de Hitchin,
  $$ \FS^{\ell+2+\delta}(X) \cap \oplus_{n\geq 2}H^0(\Sigma,K^n) \longrightarrow \cM^{s,\ell}_\delta, $$
  est une section de l'application de Hitchin qui \`a une solution
  associe ses polyn\^omes invariants.
\end{rema}
\begin{proof}
  Il s'agit de montrer que la restriction \`a $\cC^{s,\ell}_\delta$ de
  l'op\'erateur $P$ consid\'er\'e dans les sections \ref{sec:lesp-des-metr}
  et \ref{sec:espaces-fonctionnels}, \`a savoir
  $$ Q : \{ (\omega_1,\omega^c)\in \cQ^{s,\ell}_\delta \text{ tel que }\omega^c\in \cC^{s,\ell}_\delta
  \} \longrightarrow \cH^{s,\ell}_{\delta;\zeta}(\setR^3) $$
  d\'efini par $$Q(\omega_1,\omega^c)=(\omega_1\land \omega^c,\omega_1^2-\frac12 \omega^c\land \overline{\omega^c}),$$
  est une submersion. Ici l'indice $\zeta$ signifie que l'image est restreinte
  au sous-espace
  $$ \int_{M_0} \omega_1\land \omega^c = \zeta_1 \cup (\zeta_2+i\zeta_3). $$
  L'op\'erateur $Q$ est clairement la restriction de l'op\'erateur $P$
  d\'efini dans la section \ref{sec:lesp-des-metr}.

  L'analyse est d\'ej\`a faite dans la section \ref{sec:les-deform-infin},
  o\`u l'on a vu qu'en la m\'etrique standard, on peut toujours r\'esoudre
  le syst\`eme $dQ(d\eta_1,d\eta^c)=(v_1,v^c)$ par une solution du
  type
  \begin{equation}
   (\eta_1 = \eta_{1;0} + d^Cf - \imath_{\xi_2} \omega_1, \eta^c = -\imath_{\xi_2} \omega^c),\label{eq:83}
 \end{equation}
  avec $\xi_2$ un champ de vecteurs du type (\ref{eq:57}), \`a savoir
  \begin{equation}
   \xi_2 = \psi x^{-1}\partial_x + (X_2\psi)X_2 + (X_3\psi)X_3.\label{eq:84}
 \end{equation}
  Le probl\`eme dans (\ref{eq:83}) est que $\imath_{\xi_2}\omega^c$ n'a pas de
  raison d'\^etre un vecteur tangent \`a $\cC^{s,\ell}_\delta$.

  Or dans (\ref{eq:83}) le seul fait important est la valeur de $\psi$
  au bord (dans $\FS^{\ell+2+\delta}$), peu importe son extension \`a l'int\'erieur (en effet, la
  valeur au bord est l\`a pour compenser la singularit\'e de $d^Cf$). Donc
  le r\'esultat reste valable avec le choix suivant d'extension : on
  choisit $\xi_2$ dans (\ref{eq:82}) avec $g_2|_X=\psi$, et on compl\`ete
  avec une fonction $g_1$ telle que $g=g_1+ig_2$ soit \`a fr\'equences
  positives sur $X$, et $\re \int_X g_1=0$. \'Etendant $g$ comme une
  fonction holomorphe disque \`a disque, les champs de vecteurs $\xi_1$ et
  $\xi_2$ sont ainsi \'etendus sur $M_0$, et l'action infinit\'esimale de
  $\xi_1+\xi_2$ est tangente \`a $\cC^{s,\ell}_\delta$.

  La forme $\eta^c=-\imath_{\xi_2}\omega^c$ n'est pas encore tangente \`a
  $\cC^{s,\ell}_\delta$, mais, puisque $\xi_1|_X$ est un champ de vecteurs de
  contact, nous pouvons appliquer l'action infinit\'esimale de $\xi_1$
  sans sortir des espaces fonctionnels, donc utiliser \`a la place de
  (\ref{eq:83}) la solution
  \begin{equation}
    \label{eq:85}
    (\eta_{1;0} + d^Cf - \imath_{\xi_2+\xi_1} \omega_1, -\imath_{\xi_2+\xi_1} \omega^c).
  \end{equation}
  Maintenant $\imath_{\xi_1+\xi_2}\omega^c$ est un vecteur tangent \`a $\cC^{s,\ell}_\delta$, ce qui
  prouve la surjectivit\'e de $dQ$.

  La composante de Hitchin est donc une sous-vari\'et\'e de $\cC^{s,\ell}_\delta$
  dont l'espace tangent est donn\'e par les d\'eformations infinit\'esimales
  du syst\`eme, analys\'ees sections \ref{sec:les-deform-infin} et
  \ref{sec:lespace-des-modules-1}. Le th\'eor\`eme d\'ecoule alors de la
  proposition \ref{prop:Hitchin-infinitesimal} (les d\'eformations
  construites dans cette proposition ne sont pas dans la jauge fournie
  par $\cC^{s,\ell}_\delta$, mais bien entendu les constructions de la section
  \ref{sec:param-des-doma} montrent qu'on peut les y ramener).

  Enfin, l'assertion sur les polyn\^omes invariants d\'ecoule imm\'ediatement du calcul fait dans la remarque \ref{rem:pol-inv}.
\end{proof}

\section{Analyse}
\label{sec:analyse}

Nous d\'emontrons dans cette section les propri\'et\'es de base du laplacien
scalaire pour la g\'eom\'etrie induite par une m\'etrique hyperk\"ahl\'erienne
pli\'ee $g_0$. Notons $\Delta_0=x\Delta$ et rappelons (\ref{eq:32}) :
\begin{equation}
  \label{eq:86}
  \Delta_0 = - \partial_x^2 - X_2^2 - X_3^2 - x^2X_1^2 + x^2 F(\partial_x,xX_1,X_2,X_3),
\end{equation}
o\`u $F$ est un op\'erateur \`a coefficients lisses jusqu'au bord.

Les propri\'et\'es hypoelliptiques \cite{FolSte74} du laplacien $\square =
-X_2^2 - X_3^2$ permettent de d\'ecomposer une fonction $f$ sur $X$ suivant les
valeurs propres $\lambda^2$ de $\square$ :
\begin{equation}
  \label{eq:87}
  f = \sum_\lambda f_\lambda .
\end{equation}
L'espace de Folland-Stein $\FS^s$ est alors d\'efini par la norme $\sum
(1+\lambda^s) \|f_\lambda\|^2$.

\subsection{Le laplacien mod\`ele dans les espaces de Sobolev ordinaires}
\label{sec:le-laplacien-modele}

Pla\c cons-nous dans le cas mod\`ele o\`u $X$ est un quotient compact du
groupe de Heisenberg, avec la structure d\'ecrite dans la section
\ref{sec:la-geometrie-au}. Ainsi, la formule (\ref{eq:86}) devient
exacte ($F=0$). Consid\'erons la vari\'et\'e \`a bord $(M_0,g_0)$ d\'efinie par
$$ M_0 = [0,1] \times X, \quad g_0 = x(dx^2+(\theta^2)^2+(\theta^3)^2)+x^{-1}(\theta^1)^2. $$
Le laplacien $\Delta_0$ ressemble beaucoup \`a un laplacien ordinaire sur une
vari\'et\'e \`a bord, \`a la diff\'erence pr\`es que le laplacien sur le bord est
remplac\'e par le laplacien hypoelliptique $\square$. N\'eanmoins, il se
comporte de mani\`ere analogue au laplacien standard dans les espaces de
Sobolev ordinaires associ\'es, comme nous allons le voir maintenant.

Nous notons
$$ \bv = x^{-1}\vol^{g_0} = dx\land \theta^1\land \theta^2\land \theta^3 , $$
et utiliserons l'espace $L^2(\bv)$, ainsi que les espaces de Sobolev
naturellement associ\'es,
$$ H^k = \{ f, D^j f \in L^2(\bv) \text{ pour tout }j\leq k \}. $$

\begin{lemm}\label{lem:laplacien-Sob-ord}
  L'op\'erateur $\Delta_0:H^{k+2}(M_0) \to H^k(M_0)$ est un isomorphisme pour
  les deux choix suivants de condition au bord :
  \begin{enumerate}
  \item condition de Dirichlet sur les deux bords $x=0$ et $x=1$ ;
  \item condition de Dirichlet sur le bord $x=1$ et Neumann sur le
    bord $x=0$.
  \end{enumerate}
\end{lemm}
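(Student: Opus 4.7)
The plan is to combine a standard energy argument for existence with a Fourier--spectral argument for regularity, exploiting the central role of $X_1$ in the Heisenberg algebra. The operator $\Delta_0$ is formally self-adjoint on $L^2(\bv)$ with quadratic form
\[
  Q(f) = \int_{M_0}\bigl(|\partial_x f|^2+|X_2 f|^2+|X_3 f|^2+|xX_1 f|^2\bigr)\bv.
\]
In both boundary cases the Dirichlet condition at $x=1$, combined with Poincar\'e's inequality in $x$, controls $\|f\|_{L^2}$ by $\|\partial_x f\|_{L^2}$, so $Q$ is coercive on the corresponding energy space. Lax--Milgram then provides, for each $g\in L^2(\bv)$, a unique weak solution $f\in H^1$ of $\Delta_0 f=g$ satisfying the prescribed boundary conditions; uniqueness in $H^{k+2}$ follows directly from the energy identity.

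To upgrade to $H^{k+2}$ regularity, I would decompose $L^2(X)$ along joint eigenspaces of the central vector field $X_1$ and the sub-Laplacian $\square=-(X_2^2+X_3^2)$. Since $X_1$ is central in the Heisenberg algebra and $X$ is a compact nilmanifold, one obtains $L^2(X)=\bigoplus_n V_n$ with $X_1|_{V_n}=i\mu_n$; on each $V_n$ with $n\ne 0$, $\square$ has discrete spectrum given by the Landau levels $(2j+1)|\mu_n|$, while on $V_0$ it reduces to the ordinary Laplacian on the quotient of $X$ by its central circle. Each simultaneous eigenspace is preserved by $\Delta_0=-\partial_x^2+\square-x^2X_1^2$, which acts there as the scalar one-dimensional Schr\"odinger operator
\[
  L_{n,j} = -\partial_x^2 + \bigl((2j+1)|\mu_n|+x^2\mu_n^2\bigr).
\]
The potential is nonnegative, and with the stated boundary conditions classical Sturm--Liouville theory makes $L_{n,j}\colon H^{k+2}([0,1])\to H^k([0,1])$ an isomorphism with bounds uniform in $(n,j)$.

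Finally, I would reassemble these one-dimensional estimates into a global $H^{k+2}(M_0)$ estimate. Among the derivations $\partial_x,xX_1,X_2,X_3$ used to define $H^k$, the first two behave diagonally on the spectral decomposition---$\partial_x$ acts mode by mode, and $xX_1$ acts as the scalar $i\mu_n x$---while $X_2$ and $X_3$ shift Landau levels with operator norm controlled by the spectral weight $\sqrt{(2j+1)|\mu_n|}$. Combining these with the uniform 1D estimates and summing over modes then recovers the full $H^{k+2}$ bound on $f$. The main obstacle I foresee is precisely this reassembly: checking that every mixed derivation $\partial_x^{j_0}(xX_1)^{j_1}X_2^{j_2}X_3^{j_3}$ is genuinely dominated on the decomposed problem. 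This is where the commutation relations $[X_2,X_3]=-X_1$ and $[\partial_x,xX_1]=X_1$ must be exploited to balance the ``horizontal'' derivatives against the spectral weights in a manner uniform in the Fourier index.
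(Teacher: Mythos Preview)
Your approach is correct and close in spirit to the paper's: both reduce to a family of one-dimensional Schr\"odinger operators via the joint spectral decomposition of $X_1$ and $\square$. The paper, however, does not invoke Landau levels or ladder operators. It simply indexes modes by an eigenvalue $\lambda^2$ of $\square$ and an eigenvalue $in$ of $X_1$, records the inequality $|n|\leq\lambda^2$ coming from $X_1=-[X_2,X_3]$, and works from the single integration-by-parts estimate
\[
  \int_0^1 |\partial_x f|^2 + \tfrac12(\lambda^2+n^2x^2)|f|^2 \;\leq\; \tfrac12\int_0^1 \frac{|g|^2}{\lambda^2+n^2x^2}.
\]
This already controls all $X_2,X_3$ derivatives of $f$ and of $\partial_xf$ via the factor $\lambda^{2s}$.

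The point you flag as the main obstacle, the reassembly for the derivation $xX_1$, is handled in the paper by a short commutation trick that you may find cleaner than tracking shifts between Landau levels. One observes that
\[
  \Delta_0(x^jf)=x^jg-2jx^{j-1}\partial_xf-j(j-1)x^{j-2}f,
\]
so feeding $x^jf$ back into the same basic estimate inductively controls $n^jx^j f$, i.e.\ $(xX_1)^jf$; the remaining $\partial_x^2f$ is then read off from the equation itself. This avoids any appeal to the representation theory of the Heisenberg group and gives the full $H^{k+2}$ bound in a few lines. Your scheme would also go through, but be aware that the phrase ``$L_{n,j}\colon H^{k+2}([0,1])\to H^k([0,1])$ an isomorphism with bounds uniform in $(n,j)$'' is not literally what is needed: the bounds must carry the correct $\lambda$- and $n$-weights to sum to the $H^{k+2}(M_0)$ norm, which is exactly the reassembly issue you identify.
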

\begin{proof}
  L'avantage du mod\`ele plat est que le champ de vecteurs $X_1$
  engendre une action de cercle, qui commute au laplacien horizontal
  $\square$ ; aussi l'op\'erateur $\Delta_0$ se diagonalise-t-il compl\`etement
  en d\'ecomposant en outre par rapport aux coefficients de Fourier de
  l'action de cercle :
  \begin{equation}
    \Delta_0 = - \partial_x^2 + \lambda^2 + n^2 x^2. \label{eq:88}
  \end{equation}
  Par ailleurs, l'identit\'e $X_1=-[X_2,X_3]$ impose $|n|\leq \lambda^2$.

  Analyser le comportement d'un tel op\'erateur est \'el\'ementaire. Pour
  chaque $(\lambda,n)$, l'existence d'une solution unique \`a l'\'equation
  $\Delta_0f=g$ avec les conditions au bord prescrites est imm\'ediate, et il
  faut donc montrer une estimation $H^{s+2}$ sur $f$. L'outil
  essentiel est l'int\'egration par parties
  \begin{align*}
    \int_0^1 |\partial_xf|^2 + (\lambda^2+n^2x^2)|f|^2
    &= \int_0^1 fg \\
    &\leq \frac12 \int_0^1 (\lambda^2+n^2x^2)|f|^2 + \frac{|g|^2}{\lambda^2+n^2x^2}
  \end{align*}
  d'o\`u r\'esulte
  \begin{equation}
    \label{eq:89}
    \int_0^1 |\partial_xf|^2 + \frac12 (\lambda^2+n^2x^2)|f|^2
    \leq \frac12 \int_0^1 \frac{|g|^2}{\lambda^2+n^2x^2}.
  \end{equation}
  En particulier,
  \begin{equation}
    \label{eq:90}
    \int_0^1 \lambda^{2s} |\partial_xf|^2 + \lambda^{2s+4}|f|^2 \leq \lambda^{2s} \int_0^1 |g|^2
  \end{equation}
  donne d\'ej\`a le contr\^ole voulu des d\'eriv\'ees de $f$ et $\partial_xf$ suivant
  $X_2$ et $X_3$.

  On a l'\'equation
  \begin{equation}
    \label{eq:91}
    \Delta_0(xf) = xg - 2 \partial_xf .
  \end{equation}
  Appliquant (\ref{eq:89}) \`a $xf$, on obtient
  \begin{equation*}
    \int_0^1 |\partial_x(xf)|^2 + n^2x^4|f|^2 \leq \int_0^1 \frac{|xg|^2+4|\partial_xf|^2}{\lambda^2+n^2x^2}
    \leq \int_0^1 (n^{-2} + 2 \lambda^{-4}) |g|^2,
  \end{equation*}
  o\`u, dans la seconde in\'egalit\'e, on a r\'eutilis\'e (\ref{eq:89}) pour
  $f$.  Puisque $n^2\leq \lambda^4$, on obtient ainsi
$$ \int_0^1 \frac12 n^2x^2|\partial_xf|^2 + n^4x^4|f|^2 \leq \int 3|g|^2 + n^2|f|^2 \leq
4\int |g|^2; $$ l'\'equation $\Delta_0f=g$ permet alors d'estimer aussi
$\partial_x^2f$, et on a obtenu ainsi une estimation sur toutes les d\'eriv\'ees
secondes de $f$, donc
\begin{equation}
  \label{eq:92}
  \| f \|_{H^2} \leq c \| g \|_{L^2}.
\end{equation}

L'estimation plus g\'en\'erale $\| f\|_{H^{k+2}}\leq c_k \| g\|_{H^k}$ est alors \'etablie par r\'ecurrence sur $s$ : d'une part, les d\'eriv\'ees suivant $X_2$ et $X_3$ ont d\'ej\`a \'et\'e born\'ees, les d\'eriv\'ees suivant $X_1$ sont successivement born\'ees en utilisant comme ci-dessus
$$ \Delta_0(x^jf) = x^j g - 2jx^{j-1}\partial_xf - j(j-1)x^{j-2}f, $$
et les d\'eriv\'ees $\partial_x^jf$ en d\'erivant l'\'equation. Les d\'etails sont laiss\'es au lecteur, car le cas $k>0$ n'est de toute fa\c con pas utilis\'e dans la suite.
\end{proof}

\subsection{Le laplacien dans les espaces de Sobolev ordinaires}
\label{sec:le-laplacien-dans}

Les r\'esultats de la section pr\'ec\'edente se g\'en\'eralisent \`a une vari\'et\'e hyperk\"ahl\'erienne pli\'ee g\'en\'erale, $(M_0,g_0)$, \`a bord $X$. Les espaces de Sobolev ordinaires $H^k$ restent d\'efinis en prenant les normes $L^2$ par rapport \`a une forme volume $\bv=x^{-1}\vol^{g_0}$ pr\`es du bord.
\begin{prop}\label{prop:laplacien-Sob-ord}
  Le laplacien $\Delta=x^{-1}\Delta_0$ est un isomorphisme :
  \begin{enumerate}
  \item $H^{k+2}(M_0)\to x^{-1}H^k(M_0)$, avec condition de Dirichlet \`a la source ;
  \item $H^{k+2}_0(M_0)\to(x^{-1}H^k(M_0))_0$, avec condition de Neumann \`a la source, et o\`u l'indice $0$ signifie qu'on se limite aux fonctions d'int\'egrale nulle : $\int_{M_0}f\vol^{g_0}=0$.
  \end{enumerate}
\end{prop}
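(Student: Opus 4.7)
The plan is to reduce the general case to the model case of Lemma~\ref{lem:laplacien-Sob-ord} via a local parametrix construction, combined with an energy estimate for injectivity.

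First I would establish injectivity. Rewriting $\Delta f=g$ as $\Delta_0 f=xg$ and pairing with $\bar f$ against $\bv$, an integration by parts using (\ref{eq:86}) gives
\begin{equation*}
\langle \Delta_0 f,f\rangle_{\bv} = \int_{M_0}\bigl(|\partial_x f|^2+|X_2 f|^2+|X_3 f|^2+x^2|X_1 f|^2\bigr)\bv + \langle x^2 F f,f\rangle_{\bv} + \text{boundary},
\end{equation*}
where the boundary contribution vanishes under the Dirichlet condition and under the Neumann condition (the bound at $x=1$ being of Dirichlet type in both cases). The perturbation $\langle x^2 Ff,f\rangle$ involves only first derivatives of $f$ weighted by $x^2$ and is absorbed by the Dirichlet quadratic form for $x$ small, so $\Delta_0 f=0$ forces $f=0$ in the Dirichlet case and $f=\mathrm{const}$ in the Neumann case; the zero-integral restriction then kills the constants.

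Next I would construct the inverse. Away from $X$, the operator $\Delta_0=x\Delta$ is a smooth elliptic operator, so interior $H^{k+2}_{\mathrm{loc}}$ estimates and a parametrix are standard. Near $X$, I would cover $X$ by a finite atlas of Darboux charts adapted to the contact geometry (as after Theorem~\ref{theo:local}), in which the model metric (\ref{eq:15}) agrees with the flat Heisenberg-quotient model used in Lemma~\ref{lem:laplacien-Sob-ord}. By (\ref{eq:16}), in each such chart the general $\Delta_0$ differs from the Heisenberg-model $\Delta_0^{\mathrm{mod}}$ only by the $x^2 F$ correction of (\ref{eq:86}), which is second order with smooth coefficients and carries an extra factor $x^2$. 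Using Lemma~\ref{lem:laplacien-Sob-ord} to invert $\Delta_0^{\mathrm{mod}}$ chart by chart and patching via a partition of unity yields a global right parametrix $K$ with $\Delta_0\circ K=\mathrm{Id}+S$, where $S$ is compact on each $H^k$: compactness comes from the $x^2$ prefactor on the model error (a Rellich-type gain) combined with the compactness of the commutators between the patching cutoffs and the pseudo-local model inverses. This makes $\Delta_0$ Fredholm of index zero between $H^{k+2}$ (with the specified boundary condition) and $H^k$; combined with the injectivity of step one, the two statements of the proposition follow, and the $H^{k+2}$ bound is read off from the parametrix inequality $\|f\|_{H^{k+2}}\le C(\|K(xg)\|_{H^{k+2}}+\|Sf\|_{H^{k+2}})$ together with the compact perturbation argument.

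The main obstacle is the boundary parametrix in step two. Lemma~\ref{lem:laplacien-Sob-ord} is proved by the $S^1$-equivariant Fourier decomposition generated by $X_1$ on the Heisenberg quotient, which \emph{diagonalises} $\Delta_0^{\mathrm{mod}}$ into a family of one-dimensional ODEs; in a generic contact boundary the bracket identity $X_1=-[X_2,X_3]$ survives but no global circle action is available, so one cannot globalise the model argument directly. The resolution is to remain strictly local in Darboux charts, where the model inverse \emph{does} apply, and to exploit the $x^2$ smallness of the correction $x^2 F$ to guarantee that the resulting gluing error is compact on the weighted Sobolev scale. Once this Fredholm picture is set up, the isomorphism follows cleanly from the injectivity already proved.
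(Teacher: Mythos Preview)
Your approach is essentially the paper's: construct a parametrix by transplanting the model inverse of Lemma~\ref{lem:laplacien-Sob-ord} in small charts near $X$ (via the scaling~(\ref{eq:16})) and a standard elliptic parametrix in the interior, then patch. The only structural difference is that the paper, following~\cite{Biq00}, takes the boundary charts small enough that the gluing error has \emph{small norm} and inverts $\mathrm{Id}+S$ by a Neumann series, whereas you aim only for a \emph{compact} error and then appeal to a separate injectivity argument to upgrade Fredholm to isomorphism. Both routes are standard; the paper's is slightly cleaner because it avoids the side argument.

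A few inaccuracies in your injectivity sketch are worth flagging, though none is fatal. First, the general $M_0$ has only the boundary $X$; there is no ``$x=1$'' boundary---that is an artifact of the model domain $[0,1]\times X$ in the lemma. Second, the correction $F$ in~(\ref{eq:86}) is a second-order operator, not first-order, and the formula~(\ref{eq:86}) is only an asymptotic expansion near $X$, so you cannot use it to run a global integration by parts. The clean fix is to bypass~(\ref{eq:86}) entirely: since $\Delta$ is the Laplacian of the Riemannian metric $g_0$ on the interior, the identity $\int_{M_0} f\,\Delta f\,\vol^{g_0}=\int_{M_0}|\nabla f|_{g_0}^2\,\vol^{g_0}$ (with the boundary term killed by either condition at $X$) gives injectivity for Dirichlet and, after the zero-integral restriction, for Neumann, with no reference to the model expansion.
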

\begin{proof}
  Le probl\`eme est de passer du lemme \ref{lem:laplacien-Sob-ord} dans le cas mod\`ele \`a l'\'enonc\'e du th\'eor\`eme. C'est une m\'ethode classique sur laquelle nous ne donnerons pas beaucoup de d\'etails. L'\'enonc\'e r\'esulte imm\'ediatement de la construction d'un parametrix pour $\Delta$, dans chacun des deux cas. La m\'ethode consiste \`a voir qu'en tout point du bord $X$, la g\'eom\'etrie de $g$ est bien approch\'ee par celle du mod\`ele, gr\^ace \`a (\ref{eq:16}). On peut donc fabriquer un inverse approximatif de $\Delta$ en recouvrant $X$ par un nombre assez grand de petits ouverts o\`u la structure hyperk\"ahl\'erienne pli\'ee est tr\`es proche de celle du mod\`ele. L'inverse construit pour le mod\`ele peut alors \^etre greff\'e sur $M_0$ pour donner un inverse approximatif sur un voisinage de $X$ dans $M_0$ ; compl\'et\'e par un parametrix sur l'int\'erieur, il fournit le parametrix attendu. C'est exactement la m\'ethode utilis\'ee dans \cite[chapitre I]{Biq00} pour l'analyse des m\'etriques asymptotiquement hyperboliques complexes, qui offrent comme on l'a vu une g\'eom\'etrie tr\`es similaire.
\end{proof}

\begin{rema}\label{rem:N}
  De la m\^eme mani\`ere, on peut se placer sur un petit voisinage $N_\epsilon=(0,\epsilon)\times X$ de $X$ dans $M_0$, avec, comme dans la section \ref{sec:le-laplacien-modele} une condition de Dirichlet sur le bord $x=\epsilon$, et on obtient alors, tant avec la condition de Dirichlet qu'avec la condition de Neumann en $x=0$, un isomorphisme
  \begin{equation}
\Delta:H^{k+2}(N_\epsilon)\longrightarrow x^{-1}H^k(N_\epsilon).\label{eq:93}
\end{equation}
\end{rema}

\subsection{Le laplacien dans les espaces de Sobolev \`a poids}
\label{sec:le-laplacien-modele-1}

Les espaces de Sobolev ordinaires ne sont malheureusement pas suffisants pour contr\^oler la non-lin\'earit\'e de l'\'equation $P(\alpha)=0$ : \`a cause du volume $\vol^{g_0}\sim x^{-2} dx\land e^1\land e^2\land e^3$, le terme quadratique $(d\alpha_a\land d\alpha_b)$ donne lieu \`a des termes $x^{-2}D\alpha_{a,i}D\alpha_{b,j}$ qui sont trop singuliers. C'est la raison de l'utilisation d'espaces \`a poids.

Il existe des liens entre les espaces de Sobolev ordinaires et \`a poids demi-entiers : il est clair que
\begin{equation}
  \label{eq:94}
  L^2_{-\frac 12} = L^2(\bv) ,
\end{equation}
et en outre, on a l'\'egalit\'e des deux espaces,
\begin{equation}
  \label{eq:95}
  \cH^2_{1,\frac 12} = H^2,
\end{equation}
car ils sont tous deux caract\'eris\'es par la condition $D^2f\in L^2(\bv)$. La proposition \ref{prop:laplacien-Sob-ord} dit donc d\'ej\`a qu'on obtient des isomorphismes
\begin{align}
  x\cH^2_{\frac 12} & \longrightarrow x^{-1}L^2_{-\frac12}=H^0_{-\frac32} ,\label{eq:96}
\intertext{qui inclut implicitement la condition de Dirichlet sur $X$, et}
  \cH^2_{1,\frac 12;0} & \longrightarrow H^0_{-\frac32;0} \label{eq:97}
\end{align}
avec condition de Neumann \`a la source (et l'indice $0$ signifie qu'on se limite aux fonctions d'int\'egrale nulle). Notons $\cD$ et $\cN$ les inverses respectifs de $\Delta$ dans les deux cas.

\begin{rema}\label{rem:delta}
  On peut modifier la preuve du lemme \ref{lem:laplacien-Sob-ord} pour obtenir des estimations pour un poids $\delta\in (0,1)$ au lieu de $\frac12$ dans (\ref{eq:96}) et (\ref{eq:97}), mais le poids $\delta=\frac12$ suffit pour les r\'esultats de cet article.
\end{rema}

Commen\c cons par voir que ces isomorphismes s'\'etendent quand les d\'eriv\'ees sont contr\^ol\'ees avec des poids :
\begin{lemm}\label{lem:iso-poids12}
  Pour tout $s\geq 0$ on a sur $M_0$ des isomorphismes
  \begin{align}
    x\cH^{s+2}_{\frac12} & \longrightarrow H^s_{-\frac32}, \label{eq:98}\\
    \cH^{s+2}_{1,\frac 12;0} & \longrightarrow H^s_{-\frac32;0}.\label{eq:99}
  \end{align}
\end{lemm}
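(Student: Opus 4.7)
Je proc\'ederais par r\'ecurrence sur $s$, le cas $s=0$ \'etant pr\'ecis\'ement le contenu de (\ref{eq:96}) et (\ref{eq:97}). L'unicit\'e d\'ecoulant du cas de base, il suffit pour l'h\'er\'edit\'e d'\'etablir des estim\'ees a priori dans les espaces \`a poids, pour une solution $f$ de $\Delta f = g$ avec $g$ dans l'espace but du niveau $s+1$. L'ingr\'edient principal est un amor\c{c}age par commutateurs, appliqu\'e \`a la forme factoris\'ee $\Delta_0 = x\Delta$ donn\'ee par (\ref{eq:86}).

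Soit donc $g\in H^{s+1}_{-3/2}$ et $f$ la solution fournie par le cas de base. Pour $D$ une d\'erivation parmi $\{\partial_x, xX_1, X_2, X_3\}$, d\'erivons l'\'equation $\Delta_0 f = xg$ pour obtenir
\begin{equation*}
\Delta_0(Df) = (Dx)g + xDg + [\Delta_0, D]f.
\end{equation*}
Gr\^ace aux relations $[\partial_x,xX_1]=X_1=-[X_2,X_3]$ et au fait que la perturbation $x^2 F$ porte un facteur $x^2$ lisse, chaque commutateur $[\Delta_0,D]$ est un op\'erateur d'ordre 2 dans les d\'erivations standard dont les coefficients sont juste assez contr\^ol\'es par les poids choisis : par exemple, le terme potentiellement g\^enant $[\partial_x,x^2X_1^2]=2xX_1^2$ se r\'e\'ecrit $2x^{-1}(xX_1)^2$, dont l'action sur les espaces \`a poids entre bien dans le cadre de l'hypoth\`ese de r\'ecurrence appliqu\'ee \`a $f$. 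Pour les d\'erivations tangentielles $D\in\{xX_1,X_2,X_3\}$, la condition au bord de Dirichlet (resp.\ de Neumann avec la normalisation d'int\'egrale nulle) est pr\'eserv\'ee par $Df$, de sorte qu'on peut appliquer le cas de base \`a l'\'equation ci-dessus et obtenir le gain de r\'egularit\'e voulu pour $Df$. Pour la d\'erivation normale $\partial_x$, on \'evite de buter sur les conditions au bord en r\'esolvant simplement $\partial_x^2 f$ \`a partir de l'\'equation~(\ref{eq:86}) :
\begin{equation*}
\partial_x^2 f = -xg - X_2^2 f - X_3^2 f - x^2 X_1^2 f + x^2 F(\partial_x,xX_1,X_2,X_3)f,
\end{equation*}
ce qui ram\`ene toute r\'egularit\'e normale manquante \`a de la r\'egularit\'e tangentielle d\'ej\`a \'etablie.

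En it\'erant, on obtient le contr\^ole de $D^{s+3}f$ dans $L^2_{1/2-(s+3)}$, qui combin\'e au lemme~\ref{lem:extension} caract\'erise $f$ comme \'el\'ement de $x\cH^{s+3}_{1/2}$ (resp.\ $\cH^{s+3}_{1,1/2;0}$), ce qui conclut l'h\'er\'edit\'e. L'obstacle principal est la comptabilit\'e des poids dans les commutateurs et la pr\'eservation des conditions au bord (en particulier la contrainte d'int\'egrale nulle dans le cas de Neumann) le long de l'induction ; cette analyse est strictement parall\`ele \`a celle men\'ee dans le cadre asymptotiquement hyperbolique complexe de \cite[chap.~I]{Biq00}, \`a laquelle notre situation se ram\`ene via la quasi-isom\'etrie (\ref{eq:23}).
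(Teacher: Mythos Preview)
Your induction-with-commutators strategy is a natural idea, but as written it does not close: the weights do not match when you try to apply the base case (or the inductive hypothesis) to the differentiated equation.

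Concretely, take a tangential $D\in\{X_2,X_3,xX_1\}$ and suppose $g\in H^{s+1}_{-3/2}$. By definition of the weighted norms, one has $Dg\in H^{s}_{-5/2}$ (each $D$ lowers the weight by one), hence $\Delta(Df)=x^{-1}\Delta_0(Df)=Dg+x^{-1}[\Delta_0,D]f$ lands in $H^{s}_{-5/2}$, not in $H^{s}_{-3/2}$. Since the isomorphisms (\ref{eq:96})--(\ref{eq:97}) and your inductive hypothesis are only available at the single weight $\delta=\tfrac12$ (target $H^{\bullet}_{-3/2}$), you cannot feed $\Delta(Df)$ back into them. The same obstruction persists in the model case, where $[\Delta_0,X_2]=0$ and the right-hand side is exactly $X_2g\in L^2_{-5/2}$. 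Your remark on $[\partial_x,x^2X_1^2]$ correctly handles the commutator term acting on $f$, but the $Dg$ term is the one that breaks the loop. (A secondary issue: for Neumann, the preservation of $\partial_xf|_X=0$ under tangential $D$ requires $[\partial_x,D]f|_X=0$, which holds in the model but needs an argument in general; and the integral constraint $\int Df\,\vol^{g_0}=0$ is not automatic once the metric is not the flat model.)

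The paper takes a different route that avoids this weight mismatch entirely. Using the extension lemma, one reduces to a pure elliptic regularity statement in the weighted spaces: if $f\in L^2_{3/2}$ and $\Delta f\in H^s_{-3/2}$ then $f\in H^{s+2}_{3/2}$. This is then proved by rescaling: the dilation asymptotics (\ref{eq:16}) show that on a ball of radius $\sim x_0^{3/2}$ centered at height $x_0$, the rescaled metric $x_0^{-3}h_{x_0}^*g$ is uniformly close to the flat Heisenberg model, so standard interior elliptic estimates hold there with uniform constants; unscaling and summing over a locally finite cover yields the weighted estimate. This rescaling argument is insensitive to the particular value of the weight and therefore does not require the base isomorphism at any weight other than $\tfrac12$.

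If you want to salvage a commutator approach, the natural fix is to differentiate by the scale-invariant fields $x\partial_x$, $x^2X_1$, $xX_2$, $xX_3$ (the orthonormal frame for $g_0/x^3$, cf.~(\ref{eq:23})), which preserve the weight on $g$; but then the gain per step is in the $H^s_{g_0/x^3}$ scale rather than directly in the $D$-scale, and translating back requires exactly the kind of uniform local control that the rescaling argument provides.
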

\begin{proof}
  En utilisant le lemme d'extension \ref{lem:extension}, on est ramen\'e \`a montrer que si $f\in L^2_{\frac32}$ et $\Delta f\in H^s_{-\frac32}$, alors $f\in H^{s+2}_{\frac32}$, ce qui est un \'enonc\'e de r\'egularit\'e elliptique dans les espaces de Sobolev \`a poids. Cette r\'egularit\'e est une cons\'equence de l'homog\'en\'e\"\i t\'e (\ref{eq:16}) : soit $x_0>0$, alors $g'=x_0^{-3}h_{x_0}^*g$ est proche de la m\'etrique mod\`ele $G$ construite \`a partir du groupe de Heisenberg, et en particulier une boule de rayon $\rho x_0^{\frac32}$ est ainsi envoy\'ee sur une boule de rayon $\rho$ pour $g'\approx G$. On en d\'eduit qu'on dispose d'estimations elliptiques dans la boule $B_\rho(g')$ de rayon $\rho$ pour $g'$, avec constante ind\'ependante du point choisi :
$$ \|f\|_{H^{s+2}(B_\rho(g'))}^2 \leq c \left( \|f\|_{L^2(B_\rho(g'))}^2 + \|\Delta^{g'}f\|_{H^s(B_\rho(g'))}^2 \right). $$
Appliquons le changement d'\'echelle qui ram\`ene \`a $g$ : on obtient
\begin{multline*}
  \sum_0^{s+2} \|x_0^{\frac{3j}2}D^jf\|_{L^2(B_{\rho x_0^{3/2}}(g))}^2 \\ \leq c
  \left( \|f\|_{L^2(B_{\rho x_0^{3/2}}(g))}^2 + \sum_0^s
    \|x_0^{3+\frac{3j}2}\Delta^gf\|_{L^2(B_{\rho x_0^{3/2}}(g))}^2 \right).
\end{multline*}
Comme $x$ et $x_0$ sont comparables dans la boule $B_{\rho x_0^{3/2}}(g)$ si $\rho$ a \'et\'e fix\'e assez petit, c'est l'estimation \`a poids voulue, apr\`es multiplication par $x_0^{-5}$ pour avoir les bons poids. Il suffit ensuite de sommer sur un recouvrement localement fini par des boules de rayon $\rho x^{3/2}$.
\end{proof}

Analysons les inverses $\cD$ et $\cN$ de (\ref{eq:98}) et (\ref{eq:99}) sur l'espace $\cH^s_{\frac12}$ :
\begin{prop}\label{prop:laplacien-mod-poids}
  Supposons $s\geq \ell+\frac12$ et $g\in \cH^{s,\ell}_{\frac 12}$ (resp. $g\in \cH^{s,\ell}_{\frac 12;0}$). Alors $\cD g$ (resp. $\cN g$) est dans l'espace $\cH^{s+2,\ell}_{3,\frac12}$, et $\cD:\cH^{s,\ell}_{\frac 12}\to x\cH^{s+2,\ell}_{2,\frac12}$ (resp. $\cN:\cH^{s,\ell}_{\frac 12;0}\to\cH^{s+2,\ell}_{3,\frac12;0}$) est un op\'erateur continu.
\end{prop}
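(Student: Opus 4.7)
The plan is to combine three ingredients: the baseline isomorphism of Lemma \ref{lem:iso-poids12} (the $\ell = 0$ case), a commutator argument to add $\ell$ tangential derivatives, and the explicit boundary expansion forced by the PDE. I describe the Dirichlet case; the Neumann case runs in parallel, with the roles of $(f_0, f_2)$ and $(f_1, f_3)$ exchanged as dictated by (\ref{eq:33}) and the vanishing condition on $\int g$.

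First I would upgrade $f = \cD g$ from $x\cH^{s+2}_{\frac 12}$ (given by Lemma \ref{lem:iso-poids12}) to $x\cH^{s+2, \ell}_{\frac 12}$ by commuting with the horizontal fields. In view of (\ref{eq:86}), the commutator $[\Delta, X_b]$ for $b = 2, 3$ has the same $x^{-1}$ singularity as $\Delta$ but one fewer horizontal derivative, so that $X_b f$ solves $\Delta(X_b f) = X_b g + [\Delta, X_b] f$ with a right-hand side still lying in the space where Lemma \ref{lem:iso-poids12} applies. Iterating $\ell$ times yields $f \in x \cH^{s+2,\ell}_{\frac 12}$, already covering the continuity part of the proposition save for the order-$3$ expansion.

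Next I would construct the boundary expansion. The tangential upgrade yields $f_1 := (x^{-1} f)|_X \in \FS^{\ell + 2 + \frac12}$. Writing $f \sim x f_1 + x^2 f_2 + x^3 f_3$ and $g \sim g_0 + x g_1$ near $X$ and matching coefficients in $\Delta_0 f = x g$, the relation (\ref{eq:33}) forces $f_2 = 0$ in the Dirichlet case, and the next order forces $f_3 = -\frac{1}{6}(\square f_1 + g_0) \in \FS^{\ell + \frac 12}$. I then set $\tilde f = E_3(0, f_1, 0, f_3) \in \cH^{s+2, \ell}_{3, \frac 12}$ via Lemma \ref{lem:extension}. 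The formal matching at orders $x^0$ and $x^1$ is arranged precisely so that the residue $h := g - \Delta \tilde f$ vanishes to sufficient order at $X$, namely $h \in x \cH^{s, \ell}_{\frac 12}$. The remainder $r = f - \tilde f$ then satisfies the Dirichlet condition and $\Delta r = h$; reapplying the tangentially-upgraded version of Lemma \ref{lem:iso-poids12} with the improved weight on $h$ gives $r \in H^{s+2, \ell}_{\frac 72}$, which is exactly the remainder required in the definition of $\cH^{s+2, \ell}_{3, \frac 12}$. The continuity at each step combines into the announced continuity of $\cD$.

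The main obstacle lies in the tangential-regularity step: the commutators $[\Delta, X_b]$ are lower order only in the Folland--Stein scaling, not in the classical one, so one must carefully verify that each commutation produces a right-hand side still in the weighted Sobolev space targeted by Lemma \ref{lem:iso-poids12}. This is precisely the kind of bootstrap already carried out in \cite{Biq05} for asymptotically complex hyperbolic metrics, to which the present geometry is quasi-isometric via (\ref{eq:23}); this identification allows those estimates to be imported essentially unchanged.
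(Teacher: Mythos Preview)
Your overall architecture---baseline isomorphism, commutator bootstrap for tangential regularity, then explicit boundary expansion---is sound and close in spirit to the paper's argument, but there is a genuine gap in the second step that the paper handles differently.

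The issue is your claim that the tangential upgrade yields $f_1 = (x^{-1}f)|_X \in \FS^{\ell+2+\frac12}$. Commuting $\ell$ times with $X_2, X_3$ and reapplying Lemma~\ref{lem:iso-poids12} gives only $f \in x\cH^{s+2,\ell}_{\frac12}$, hence $x^{-1}f \in \cH^{s+2,\ell}_{\frac12} = \FS^{\ell+\frac12} \oplus H^{s+2,\ell}_{\frac12}$, so the trace $f_1$ lies a~priori only in $\FS^{\ell+\frac12}$. You are missing two tangential derivatives, and these are exactly what is needed for $f_3 = -\tfrac16(\square f_1 + g_0)$ to land in $\FS^{\ell+\frac12}$ and for the expansion to sit in $\cH^{s+2,\ell}_{3,\frac12}$. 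Commuting further with $X_2,X_3$ is blocked because $g$ carries only $\ell$ tangential derivatives at the boundary.

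The paper recovers those two derivatives by commuting with the \emph{second-order} operator $\square$ (and with $X_1$) rather than with $X_2,X_3$ individually: from $[\square,\Delta]=O_3$ one obtains $\square f \in x\cH^s_{\frac12}$ directly, whence $\square f_1 \in \FS^{\frac12}$ and so $f_1 \in \FS^{\frac52}$ already at $\ell=0$. Combined with the commutation $[X_1,\Delta]=x^{-1}O_2$ this yields $D^2 f \in x\cH^s_{\frac12}$, i.e.\ an order-two expansion without any separate construction step; the $\ell>0$ case then follows by applying the same estimates to $\square^{\ell/2}f$. Your expansion-by-hand step is correct once $f_1$ has the right regularity, so the fix is simply to replace (or supplement) the $X_b$-commutations by a commutation with $\square$.
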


\begin{proof}
  On utilise la commutation de $\square$ et $X_1$ avec $\Delta$. Dans le cas mod\`ele, on a exactement $[X_1,\Delta]=0$ et $[\square,\Delta]=0$. En g\'en\'eral, observons que $\theta^1([X_1,X_2]) = -d\theta^1(X_1,X_2) = 0$, donc $[X_1,X_2]$ est horizontal, et de m\^eme $[X_1,X_3]$. Il en r\'esulte $[X_1,\square] = \partial^2$, o\`u par $\partial^2$ nous entendons n'importe quel op\'erateur sur $X$, d'ordre 2 en les d\'erivations horizontales, et finalement
  \begin{align}
    \label{eq:100}
    [X_1,\Delta] &= x^{-1} O_2 , \\
    [\square,\Delta] &= O_3,\label{eq:101}
  \end{align}
  o\`u $O_j$ est un op\'erateur d'ordre $j$ en les d\'erivations $e_i$.

  Commen\c cons par le probl\`eme de Dirichlet. Supposons $g\in
  x\cH^s_{\frac12}$ et posons $f=\cD g\in x\cH^{s+2}_{\frac12}$. \`A
  partir de $\square g\in H^{s-2}_{-\frac12}$ et $[\square,\Delta]f\in
  H^{s-1}_{-\frac32}$ par (\ref{eq:101}), le lemme
  \ref{lem:iso-poids12}, appliqu\'e \`a un voisinage $N_\epsilon$ de $X$ dans $M_0$ comme dans la remarque \ref{rem:N}, donne (avec estimation)
  \begin{equation}
  \square f\in x\cH^s_{\frac12}.\label{eq:102}
  \end{equation}
  De m\^eme, $X_1g\in H^{s-1}_{-\frac12}$ et $[X_1,\Delta]f\in H^s_{-\frac32}$,
  donc $X_1f\in x\cH^{s+1}_{\frac12}$, d'o\`u on d\'eduit
  \begin{equation}
  x^2X_1^2f\in x\cH^s_{\frac 12}.\label{eq:103}
  \end{equation}
  De (\ref{eq:102}), (\ref{eq:103}) et de l'\'equation $\Delta f=g$ on d\'eduit
  enfin $\partial_x^2f\in x\cH^s_{\frac 12}$, donc on a pour toutes les
  d\'eriv\'ees secondes (avec estimation en fonction de $\|g\|_{x\cH^s_{\frac12}}$)
  \begin{equation}
    \label{eq:104}
    D^2f \in x\cH^s_{\frac 12}.
  \end{equation}
  Il s'ensuit que $f\in \cH^{s+2}_{2,\frac12}$, et m\^eme, puisque $f$
  satisfait la condition de Dirichlet, $f\in x\cH^{s+2}_{1,\frac12}$.

  Le probl\`eme de Neumann est similaire : si $g\in x\cH^s_{\frac12}$ et
  $f=\cN g$, alors le m\^eme raisonnement fournit un contr\^ole $D^2f\in
  \cH^{s-2}_{1,\frac12}$, qui implique $f\in \cH^{s+2}_{2,\frac12}$.

  Finalement, le cas o\`u $\ell\neq 0$ s'en d\'eduit par r\'ecurrence sur $\ell$, en
  appliquant les estimations pr\'ec\'edentes \`a $\square^{\frac \ell2}f$, dont
  les images par $\Delta$ sont contr\^ol\'ees gr\^ace \`a la commutation \'evidente
  $[\square^{\frac12},\Delta]=x^{-1}O_2$ (en fait, on peut m\^eme montrer que ce
  commutateur est $O_2$).
\end{proof}

\providecommand{\bysame}{\leavevmode ---\ }
\providecommand{\og}{``}
\providecommand{\fg}{''}
\providecommand{\smfandname}{\&}
\providecommand{\smfedsname}{\'eds.}
\providecommand{\smfedname}{\'ed.}
\providecommand{\smfmastersthesisname}{M\'emoire}
\providecommand{\smfphdthesisname}{Th\`ese}

\end{document}